\def\rr{{\mathbb R}}
\def\rn{{{\rr}^n}}
\newtheorem{theorem}{Theorem}[section]
\newtheorem{corollary}{Corollary}[section]
\newtheorem{lemma}[theorem]{Lemma}
\theoremstyle{definition}
\newtheorem{definition}[theorem]{Definition}
\theoremstyle{remark}
\newtheorem{remark}[theorem]{Remark}
\numberwithin{equation}{section}
\begin{document}
\arraycolsep=1pt

\title{WEIGHTED MULTILINEAR HARDY OPERATORS AND COMMUTATORS}

\author{Zun Wei Fu}
\address{Department of Mathematics, Linyi University, Linyi Shandong, 276005, P.R. China}
\email{zwfu@mail.bnu.edu.cn}
\thanks{This work was partially supported by
 NSF of China (Grant No. 11271175, 10901076, 10931001 and 11101038).}

\author{Shu Li Gong}
\address{College of Mathematics and
Economtrics, Hunan University, Changsha 410082, P.R. China}
\email{gongshuli123@163.com}

\author{Shan Zhen Lu}
\address{School of Mathematical Sciences, Beijing Normal University,
Laboratory of Mathematics and Complex Systems, Ministry of
Education, Beijing 100875, P.R. China}
\email{lusz@bnu.edu.cn}

\author{Wen Yuan}
\address{School of Mathematical Sciences, Beijing Normal University,
Laboratory of Mathematics and Complex Systems, Ministry of
Education, Beijing 100875, P.R. China}
\email{wenyuan@bnu.edu.cn}

\subjclass[2010]{Primary 47G10; Secondary 47H60, 47A63, 47A30}

\date{}

\keywords{weighted Hardy operator, multilinear operator, BMO, Morrey space, commutator.}

\begin{abstract}

In this paper, we introduce a type of
weighted multilinear Hardy operators
and obtain their sharp bounds on the product of Lebesgue spaces and central Morrey spaces.
In addition, we  obtain  sufficient and necessary conditions of the weight functions
so that the commutators of
the weighted multilinear Hardy operators (with symbols in
central BMO space) are bounded on the product of central Morrey spaces.
These results are further used to
prove sharp estimates of some inequalities due to Riemann-Liouville and Weyl.
\end{abstract}

\maketitle


\section{Introduction}

Let $\omega:\, [0,1]\rightarrow [0,\infty)$ be a measurable function.
The {\it weighted Hardy
operator} $H_{\omega}$ is defined on all complex-valued measurable
functions $f$ on $\mathbb R^n$ as follows:
$$H_{\omega}f(x):=\int^1_{0}f(tx)\omega(t)\,dt,\hspace{3mm}x\in \mathbb{ R}^n.$$
Under certain conditions on $\omega$, Carton-Lebrun and Fosset \cite{CF} proved that
$H_{\omega}$ maps $L^{p}(\mathbb{ R}^n)$ into
itself for $1<p<\infty$. They also pointed out that the operator $H_{\omega}$
commutes with the Hilbert transform when $n=1$, and
with certain Calder\'{o}n-Zygmund singular integral
operators including the Riesz transform when $n\geq2$.
A further extension
 of the results obtained in \cite{CF} was due to Xiao in \cite{X}.


\medskip

\noindent{\bf Theorem A  \cite{X}.\,}
Let $1<p<\infty$ and $\omega :
[0,1]\rightarrow [0,\infty)$ be a measurable function. Then,
$H_{\omega}$ is bounded on $L^{p}(\mathbb{ R}^n)$ if and only if
$$\mathbb{A}:=\int^{1}_{0}t^{-n/p}\omega(t)\,dt<\infty. \eqno(1.1)$$
Moreover, $$\|H_{\omega}f\|_{L^{p}(\mathbb{R}^n)
\rightarrow L^{p}(\mathbb{R}^n)}=\mathbb{A}.\eqno(1.2)$$

\medskip

Notice that the condition (1.1) implies that $\omega$ is integrable on [0,1].
The constant $\mathbb{A}$  seems to be of interest as it equals
to $\frac{p}{p-1}$ if $\omega\equiv 1$ and $n=1$. In this case,
$H_{\omega}$ is reduced to the {\it classical Hardy operator} $H$ defined by
$$Hf(x):=\frac{1}{x}\int^x_{0}f(t)\,dt,\, x\neq0,$$
which is the most fundamental averaging operator in analysis.
Also, a celebrated integral inequality,
due to Hardy \cite{HLP}, can be deduced from Theorem A immediately
$$\|Hf\|_{L^{p}(\mathbb{R})}\leq \frac{p}{p-1}\|f\|_{L^{p}(\mathbb{R})},
\eqno(1.3)$$ where $1<p<\infty$ and the constant $\frac{p}{p-1}$ is
the best possible.

Another interesting application of Theorem A is  the sharp estimate of
the Riemann-Liouville integral operator on the Lebesgue spaces.
To be precise, let $n=1$ and we take
$$\omega(t):=\frac{1}{\Gamma(\alpha)(1-t)^{1-\alpha}}, \quad t\in[0,1],$$
where $0<\alpha<1$. Then
$$H_{\omega}f(x)=x^{-\alpha}I_{\alpha}f(x),\quad x>0,$$
where $I_{\alpha}$ is the {\it Riemann-Liouville integral operator}
defined by
$$I_{\alpha}f(x):=\frac{1}{\Gamma(\alpha)}
\int_{0}^{x}\frac{f(t)}{(x-t)^{1-\alpha}}\,dt, \quad x>0.$$
Note that the operator $I_{\alpha}$
is exactly the one-sided version of the well-known Riesz potential
$$\mathcal{I}_{\alpha}f(x):=C_{n, \alpha}\int_{\mathbb{R}^{n}}
\frac{f(t)}{|x-t|^{n-\alpha}}\,dt,~~~x\in\rr.$$
Clearly, Theorem A implies the celebrated result of
Hardy, Littlewood  and  Polya in [8, Theorem 329], namely,
for all $0<\alpha<1$ and $1<p<\infty$,
$$\|I_{\alpha}\|_{L^{p}(\rr)\to L^p(x^{-p\alpha} dx)}
=\frac{\Gamma(1-1/p)}{\Gamma(1+\alpha-1/p)}.\eqno(1.4)$$

Now we recall the commutators of weighted Hardy operators introduced
in \cite{FLL}. For any locally integrable function $b$ on $\mathbb{R}^n$
and integrable function
$\omega :\, [0,1]\rightarrow [0,\infty)$, the {\it commutator
of the weighted Hardy operator} $H_{\omega}^{b}$ is defined by
$$H_{\omega}^{b}f:=bH_{\omega}f-H_{\omega}(bf).$$

It is easy to see that the commutator
$H_{\omega}^{b}$ is bounded on $L^{p}(\mathbb{R}^{n})$ for $1<p<\infty$ when $b\in L^{\infty}({\mathbb{R}}^{n})$ and
$\omega$ satisfies the condition (1.1).
An interesting choice of $b$ is that it belongs
to the class of $\mathrm{BMO}({\mathbb{R}}^{n})$.
Recall that $\mathrm{BMO}({\mathbb{R}}^{n})$ is defined to be the space of
all $b\in L_{loc}{({\mathbb{R}}^{n})}$ such that
$$\|b\|_{BMO}:=\sup_{Q\subset\mathbb{R}^{n}}\frac{1}{|Q|}\int_{Q}|b(x)-b_{Q}|
\,dx< \infty,$$
where $b_{Q}:=\frac{1}{|Q|}\int_{Q}b$ and the supremum is taken over all
cubes $Q$ in ${\mathbb{R}^n}$ with sides parallel to the axes.
It is well known that
$L^{\infty}({\mathbb{R}}^{n})\varsubsetneq \mathrm{BMO}({\mathbb{R}}^{n})$
since $\mathrm{BMO}({\mathbb{R}}^{n})$ contains unbounded functions such as $\log|x|$.

When symbols $b\in\mathrm{BMO}({\mathbb{R}}^{n})$,
the condition (1.1) on weight functions $\omega$ does not ensure
the boundedness of $H_{\omega}^{b}$ on $L^{p}(\mathbb{R}^{n})$.
Via controlling $H_{\omega}^{b}$ by the Hardy-Littlewood maximal operators instead of
sharp maximal functions, Fu, Liu and Lu \cite{FLL} established sufficient and necessary
conditions on weight functions $\omega$
which ensure that $H_{\omega}^{b}$ is bounded on
$L^{p}(\mathbb{R}^{n})$ when $1<p<\infty$. Precisely,
they obtain the following conclusion.
\medskip

\noindent{\bf Theorem B.\,}
Let
\[
\mathbb{C}:=\int^{1}_{0}t^{-n/p}\omega(t)\log\frac{2}{t}\,dt
\]
and $1<p<\infty$. Then following statements are equivalent:\\
$\rm(i)$\quad $\omega$ is integrable and
$H^{b}_\omega$ is  bounded  on $L^{p}(\mathbb{R}^{n})$
for all $b\in
\mathrm{BMO}(\mathbb{R}^{n})$;\\
$\rm(ii)$\quad $\mathbb{C}<\infty.
$
\medskip

We remark that the condition (1.1), i.\,e., $\mathbb{A}<\infty$, is
weaker than $\mathbb{C}<\infty$ in Theorem B.
In fact, if we let
\[
\mathbb{B}:=\int^{1}_{0}t^{-n/p}\omega(t)\log\frac{1}{t}\,dt,
\]
then
 $\mathbb{C}=\mathbb{A}\log2+\mathbb{B}$. Hence
$\mathbb{C}<\infty$ implies $\mathbb{A}<\infty$. However, $\mathbb{A}<\infty$
can not imply $\mathbb{C}<\infty$. To see this, for $0<\alpha<1$,
let
\[
e^{s(n/p-1)}\tilde{\omega}(s)=\left\{
\begin{array}{ll}
s^{-1+\alpha},&\quad 0<s\leq 1,\\
s^{-1-\alpha},&\quad 1<s<\infty,\\
0,&\quad s=0, \infty \end{array}\right.\eqno(1.5)
\]
and $\omega(t):=\tilde{\omega}(\log\frac{1}{t})$, $0\leq t\leq1$. Then
it is not difficult to verify $\mathbb{A}<\infty$ and $\mathbb{C}=\infty$.

Later on in \cite{FZW}, the conclusions in Theorems A and B were
further generalized  to the central Morrey spaces $\dot{B}^{p,\lambda}(\rn)$ and the central
BMO space $C\dot{M}O^q(\rn)$. Here the space $C\dot{M}O^q(\rn)$ was first
introduced by Lu and Yang in \cite{LY2}, and
the space $\dot{B}^{p,\lambda}(\rn)$ is a generalization of
$C\dot{M}O^q(\rn)$
introduced by Alvarez, Guzman-Partida and Lakey in
\cite{AGL}; see also \cite{bg}.

\begin{definition} Let $\lambda\in
\mathbb{R}$ and $1<p<\infty$. The \emph{central Morrey space}
$\dot{B}^{p,\,\lambda}(\mathbb{R}^{n})$ is defined to
be the space of all locally $p$-integrable functions $f$ satisfying that
$$\|f\|_{\dot{B}^{p,\,\lambda}}=\sup_{R>0}\biggl(\frac{1}{|B(0, R)|^{1+\lambda p}}
\int_{B(0, R)}|f(x)|^{p}dx\biggr)^{1/p}<\infty.$$
\end{definition}

Obviously, $\dot{B}^{p,\,\lambda}(\rn)$ is a Banach space. One can easily
check that $\dot{B}^{p,\lambda}(\mathbb{R}^n)=\{0\}$ if $\lambda<-1/p$, $\dot{B}^{p,0}(\mathbb{R}^n)=\dot{B}^{p}(\mathbb{R}^n)$,
$\dot{B}^{q,-1/q}(\mathbb{R}^n)=L^{q}(\mathbb{R}^n)$, and
$\dot{B}^{p,\lambda}(\mathbb{R}^n)\supsetneq L^{p}(\mathbb{R}^n)$
if $\lambda>-1/p$, where the space $\dot{B}^{p}(\mathbb{R}^n)$ was
introduced by Beurling in \cite{B}.
Similar to the classical Morrey space,
we only consider the case $-1/p<\lambda\leq0$ in this paper.
In the past few years, there is an increasing interest
on the study of Morrey-type spaces and their various generalizations
and the related theory of operators; see, for example,
\cite{AGL,GAK,FZW,MN,KMNY}.

\begin{definition} Let
$1<q<\infty$. A function $f\in L_{\mathrm{loc}}^{q}(\mathbb{R}^{n})$ is said
to belong to the \emph{central bounded mean oscillation space}
$C\dot{M}O^{q}(\mathbb{R}^{n})$ if
$$\|f\|_{C\dot{M}O^{q}}=\sup_{R>0}\biggl(\frac{1}{|B(0, R)|}
\int_{B(0, R)}|f(x)-f_{B(0,\, R)}|^{q}dx\biggr)^{1/q}<\infty.\eqno(1.6)$$
\end{definition}

The space $C\dot{M}O^{q}(\rn)$ is a Banach
space in the sense that two functions that differ by
a constant are regarded as a function in this space.
Moreover, {\rm(1.6)} is equivalent
to the following condition
$$\sup_{R>0}\inf_{c\in\mathbb{C}}\biggl(\dfrac{1}{|B(0, R)|}
\int_{B(0, R)}|f(x)-c|^{q}dx\biggr)^{1/q}<\infty.$$
For more detailed properties of these two spaces, we refer to \cite{FZW}.

For $1<p<\infty$ and $-1/p< \lambda\le0$, it was proved in \cite[Theorem 2.1]{FZW} that
the  norm
$$\|H_w\|_{\dot{B}^{p,\,\lambda}(\rn)\to \dot{B}^{p,\,\lambda}(\rn)}
=\int_0^1 t^{n\lambda} w(t)\,dt.$$
Moreover, if $1<p_1<p<\infty$, $1/p_1=1/p+1/q$ and $-1/p<\lambda<0$, then it was proved
in \cite[Theorem 3.1]{FZW} that $H^b_w$ is bounded from $\dot{B}^{p,\,\lambda}(\rn)$
to $\dot{B}^{p_1,\,\lambda}(\rn)$ if and only if
$$\int_0^1 t^{n\lambda} w(t) \log\frac2{t}\,dt<\infty,$$
where the symbol $b\in C\dot{M}O^{q}(\rn)$.

In this paper, we consider the multilinear version of the above results.
Recall that the weighted multilinear Hardy operator is defined as follows.

\begin{definition}
Let $m\in\mathbb{N}$, and
$$\omega:\, \overbrace{{[0,1]\times[0,1]\times\cdots\times[0,1]}}^{m\,\text{times}}
\rightarrow [0,\infty)$$
be an integrable function.
The {\it weighted multilinear Hardy operator $\mathcal{H}_{\omega}^m$}
is defined as
\[
\mathcal{H}_{\omega}^m(\vec{f})(x):=
\int\limits_{0<t_{1},t_{2},\ldots,t_{m}<1}
\left(\prod_{i=1}^{m}f_{i}(t_{i}x)\right)\omega(\vec{t})\,d\vec t,\quad x\in \mathbb{R}^n,
\]
where $\vec f:=(f_1,\ldots, f_m)$, $\omega(\vec{t}):=\omega(t_{1},t_{2},\ldots,t_{m})$, $d\vec t:= dt_1\,\cdots\,dt_m$,
 and $f_{i}~(i=1,\ldots,m)$ are
 complex-valued measurable functions on $\mathbb{R}^n$.
When $m=2$, $\mathcal{H}_{\omega}^m$ is referred to as bilinear.
\end{definition}

The study of multilinear averaging operators is traced to the
multilinear singular integral operator theory (see, for example, \cite{CM}), and motivated
not only the generalization of the theory of linear ones but also their
natural appearance in analysis. For a more complete account on multilinear operators, we
refer to \cite{FL}, \cite{GL}, \cite{L} and the references
therein.

The main aim of the paper is to establish the sharp bounds of
weighted multilinear Hardy operators on the product
of Lebesgue spaces and central Morrey spaces. In addition,
we find sufficient and necessary conditions of the weight functions
so that commutators of such weighted multilinear Hardy operators (with symbols in
$\lambda$-central BMO space) are bounded on the product of central Morrey spaces.

The paper is organized as follows: Section 2 is devoted to the sharp estimates of $\mathcal{H}_{\omega}^m$ on the products of Lebesgue spaces and also
central Morrey spaces. In Section 3, we  present the sharp estimates of the commutator generated by $\mathcal{H}_{\omega}^m$ with symbols in $\dot{CMO}^q(\rn)$.
Section 4 focuses on weighted Ces\`{a}ro operators
of multilinear type related to weighted multilinear Hardy operators.

\section{Sharp boundedness of $\mathcal{H}_{\omega}^m$ on the
product of central Morrey spaces}

We begin with the following sharp boundedness of $\mathcal{H}_{\omega}^m$ on the product of
Lebesgue spaces, which when $m=1$ goes back to Theorem A.

\begin{theorem}\label{t1}
Let  $1<p, p_i<\infty$, $i=1,\ldots, m$ and  $1/p=1/p_1+\cdots+1/p_m$.
Then, $\mathcal{H}_{\omega}^m$ is bounded from
$L^{p_1}(\rn)\times \dots \times L^{p_m}(\rn)$ to $ L^p(\rn)$ if and only
if
\begin{eqnarray}\label{A}
\mathbb{A}_m:=\int\limits_{0<t_{1},t_{2},...,t_{m}<1}
\left(\prod_{i=1}^{m}t_{i}^{-n/p_{i}}\right)\omega(\vec{t})\,d\vec{t}<\infty.
\end{eqnarray}
Moreover,
$$\|\mathcal{H}_{\omega}^m\|_{L^{p_1}(\rn)\times \dots \times L^{p_m}(\rn)
\rightarrow L^{p}(\rn)}=\mathbb{A}_m.$$
\end{theorem}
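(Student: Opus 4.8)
The plan is to prove the two one-sided bounds $\|\mathcal{H}_{\omega}^m\|\le\mathbb{A}_m$ and $\|\mathcal{H}_{\omega}^m\|\ge\mathbb{A}_m$ separately. The first gives sufficiency and the upper half of the norm identity; the second gives the lower half, and since it shows the operator norm is at least $\mathbb{A}_m$ (read as $+\infty$ when $\mathbb{A}_m=\infty$), it simultaneously forces $\mathbb{A}_m<\infty$ whenever $\mathcal{H}_{\omega}^m$ is bounded, i.e.\ necessity. The whole argument parallels the proof of Theorem A in the case $m=1$, with H\"older's inequality taking over the role played by the single-exponent estimate there.

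For the upper bound I would begin from the pointwise estimate $|\mathcal{H}_{\omega}^m(\vec f)(x)|\le\int_{(0,1)^m}\prod_{i=1}^m|f_i(t_ix)|\,\omega(\vec t)\,d\vec t$ and apply Minkowski's integral inequality in $L^p(\rn)$, which is legitimate since $p\ge1$ and $\omega(\vec t)\,d\vec t$ is a positive measure; this pulls the $L^p$ norm inside the $\vec t$-integral. Then I would apply H\"older's inequality with exponents $p_1,\dots,p_m$ (permitted because $1/p=1/p_1+\cdots+1/p_m$) to factor $\|\prod_i f_i(t_i\cdot)\|_{L^p}\le\prod_i\|f_i(t_i\cdot)\|_{L^{p_i}}$, and finally invoke the dilation identity $\|f_i(t_i\cdot)\|_{L^{p_i}(\rn)}=t_i^{-n/p_i}\|f_i\|_{L^{p_i}(\rn)}$, obtained from the change of variables $y=t_ix$. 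Collecting the factors reproduces exactly $\mathbb{A}_m\prod_i\|f_i\|_{L^{p_i}}$, so $\|\mathcal{H}_{\omega}^m\|\le\mathbb{A}_m$.

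For the lower bound I would test on near-extremizers. The homogeneous functions $f_i(x)=|x|^{-n/p_i}$ are formally extremal, since then $\prod_i f_i(t_ix)=\bigl(\prod_i t_i^{-n/p_i}\bigr)|x|^{-n/p}$ and hence $\mathcal{H}_{\omega}^m(\vec f)(x)=\mathbb{A}_m\,|x|^{-n/p}$; but they miss $L^{p_i}$ and must be regularized. The \emph{main obstacle} is to regularize in a way that makes the dimensional and $\delta$-dependent constants cancel exactly, leaving $\mathbb{A}_m$ and not a strictly smaller multiple: a naive equal shift $|x|^{-n/p_i-\epsilon}$ on every factor produces the wrong constant in the asymmetric case. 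The correct choice distributes the extra decay as $\delta/p_i$, namely
\[
f_{i,\delta}(x):=|x|^{-(n+\delta)/p_i}\,\chi_{\{|x|>1\}},\qquad \delta>0,
\]
for which $\|f_{i,\delta}\|_{L^{p_i}}^{p_i}=c_n/\delta$, with $c_n$ the surface measure of the unit sphere in $\rn$, so that $\prod_i\|f_{i,\delta}\|_{L^{p_i}}=(c_n/\delta)^{1/p}$.

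With these test functions I would restrict the $x$-integral to $|x|>N$ and the $\vec t$-integral to $(1/N,1)^m$, where every cutoff $\chi_{\{|t_ix|>1\}}$ equals $1$, to get the pointwise lower bound $\mathcal{H}_{\omega}^m(\vec f_\delta)(x)\ge|x|^{-(n+\delta)/p}\,\mathbb{A}_m^N(\delta)$ for $|x|>N$, where $\mathbb{A}_m^N(\delta):=\int_{(1/N,1)^m}\prod_i t_i^{-(n+\delta)/p_i}\,\omega(\vec t)\,d\vec t$. Since $\int_{|x|>N}|x|^{-(n+\delta)}\,dx=c_nN^{-\delta}/\delta$, dividing by $\prod_i\|f_{i,\delta}\|_{L^{p_i}}$ cancels the factor $(c_n/\delta)^{1/p}$ and yields $\|\mathcal{H}_{\omega}^m\|\ge\mathbb{A}_m^N(\delta)\,N^{-\delta/p}$. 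Letting $\delta\to0^{+}$ with $N$ fixed (so $N^{-\delta/p}\to1$ and, by monotone convergence since $\omega\ge0$, $\mathbb{A}_m^N(\delta)\to\int_{(1/N,1)^m}\prod_i t_i^{-n/p_i}\omega\,d\vec t$) and then $N\to\infty$ (monotone convergence again) gives $\|\mathcal{H}_{\omega}^m\|\ge\mathbb{A}_m$; in particular $\mathbb{A}_m=\infty$ forces unboundedness, which is the necessity. Beyond getting the regularization exponents right, I expect the remaining work to be only the routine justification of these limit interchanges, all of which are monotone and hence unproblematic.
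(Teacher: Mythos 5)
Your proposal is correct and follows essentially the same route as the paper: Minkowski plus H\"older plus the dilation identity for the upper bound, and for the lower bound the same family of test functions $|x|^{-(n+\delta)/p_i}$ truncated near the origin (the paper's $f_i^{\varepsilon}$ are exactly this with $\delta=p_2\varepsilon$ and cutoff at $\sqrt{2}/2$), with the decay distributed as $\delta/p_i$ precisely so that the norm factors cancel. The only cosmetic difference is that you decouple the truncation parameter $N$ from $\delta$ and take iterated limits, whereas the paper ties both to the single parameter $\varepsilon$; both versions are sound.
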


\begin{proof}[Proof]

 In order to simplify the proof, we only
consider the case that $m=2$. Actually,  a similar procedure
works for all $m\in \mathbb{N}$.

Suppose that (\ref{A}) holds. Using Minkowski's inequality yields
$$\begin{array}{rl}
\displaystyle \|\mathcal{H}_{\omega}^2(f_{1}, f_{2})
\|_{L^p(\mathbb{R}^{n})}
&=\displaystyle\left(
\int_{\mathbb{R}^n}\left|\int\limits_{0<t_{1},t_{2}<1}f_{1}
(t_{1}x)f_{2}(t_{2}x)\omega(t_{1},t_{2})\,dt_{1}dt_{2}
\right|^{p}dx\right)^{1/p}\\
&\leq\displaystyle \int\limits_{0<t_{1},t_{2}<1}\left(\int_{\mathbb{R}^n}\left|f_{1}(t_{1}x)f_{2}(t_{2}x)\right|^{p}dx\right)^{1/p}\omega(t_{1},t_{2})\,dt_{1}dt_{2}.
\end{array}$$
By H\"{o}lder's inequality with $1/p=1/p_{1}+1/p_{2}$, we see that
$$\begin{array}{rl}
\displaystyle \|\mathcal{H}_{\omega}^2(f_{1}, f_{2})\|_{L^p(\mathbb{R}^{n})}&\leq\displaystyle \int\limits_{0<t_{1},t_{2}<1}\prod_{i=1}^{2}\left(\int_{\mathbb{R}^n}\left|f_{i}(t_{i}x)\right|^{p_{i}}dx\right)^{1/p_{i}}\omega(t_{1},t_{2})\,dt_{1}dt_{2}\\
&\leq\displaystyle \left(\prod_{i=1}^{2}\|f_{i}\|_{L^{p_i}(\mathbb{R}^{n})}\right)\int\limits_{0<t_{1},t_{2}<1}\left(\prod_{i=1}^{2}t_{i}^{-n/p_{i}}\right)\omega(t_{1},t_{2})\,dt_{1}dt_{2}.\end{array}$$
Thus, $\mathcal{H}_{\omega}^2$ maps
the product of Lebesgue spaces
$L^{p_1}(\mathbb{R}^{n})\times L^{p_2}(\mathbb{R}^{n})$ to $
L^p(\mathbb{R}^{n})$
and
\begin{eqnarray}\label{2.1}
\|\mathcal{H}_{\omega}^2\|_{L^{p_1}(\mathbb{R}^{n})\times L^{p_2}(\mathbb{R}^{n})\rightarrow
L^p(\mathbb{R}^{n})}\leq\mathbb{A}_2.
\end{eqnarray}

To see the necessity, for sufficiently small $\varepsilon\in (0, 1)$, we set
\begin{eqnarray}\label{2.2}
f^{\varepsilon}_{1}(x):=
\begin{cases}
0,&\quad |x|\leq\frac{\sqrt{2}}{2},\\
\displaystyle|x|^{-\frac{n}{p_1}-\frac{p_2\varepsilon}{p_1}},&\quad
|x|>\frac{\sqrt{2}}{2},\end{cases}
\end{eqnarray}
and
\begin{eqnarray}\label{2.3}
f^{\varepsilon}_{2}(x):=
\begin{cases}
0,&\quad |x|\leq\frac{\sqrt{2}}{2},\\
\displaystyle|x|^{-\frac{n}{p_2}-\varepsilon},&\quad
|x|>\frac{\sqrt{2}}{2}.
\end{cases}
\end{eqnarray}
An elementary calculation gives  that
$$
\|f_1^\varepsilon\|_{L^{p_1}(\mathbb{R}^{n})}^{p_1}
=\|f_2^\varepsilon\|_{L^{p_2}(\mathbb{R}^{n})}
^{p_2}=\frac{\omega_n}{p_2\varepsilon}
\Big(\frac{\sqrt{2}}{2}\, \Big)^{-p_2\varepsilon},
$$
where $\omega_n=\frac{n\pi^{n/2}}{\Gamma(1+n/2)}$ is the volume of the unit sphere.
Consequently, we have
\begin{eqnarray*}
&&\|\mathcal{H}_{\omega}^2(f_{1}^\varepsilon, f_{2}^\varepsilon)\|_{L^p(\mathbb{R}^{n})}\\
&&\hspace{0.2cm}
=\left\{\int_{\mathbb{R}^n}
|x|^ {-n-p_2\varepsilon}
\left[\int\limits_{E_{x}(t_{1}, t_{2})}t_{1}^{-\frac{n}{p_1}
-\frac{p_2\varepsilon}{p_1}}
t_{2}^{-\frac{n}{p_2}-\varepsilon}\omega(t_{1},t_{2})
\,dt_{1}dt_{2}\right]^p\,dx\right\}^{1/p},
\end{eqnarray*}
where
\[
E_{x}(t_{1}, t_{2}):=\left\{(t_{1}, t_{2})|\, 0<t_{1},t_{2}<1;\,
t_1>\frac{\sqrt {2}}{2|x|};\,
t_2>\frac{\sqrt {2}}{2|x|}\right\}.
\]
Hence,
$$\begin{array}{rl}\displaystyle&\|\mathcal{H}_{\omega}^2
(f^{\varepsilon}_{1}, f^{\varepsilon}_{2})(x)\|^{p}_{L^p(\mathbb{R}^{n})}\\
&\hspace{0.2cm}\displaystyle\geq\int_{|x|>1/\varepsilon}|x|^{-n-p_2\varepsilon}
\left(\int\limits_{E_{\frac{1}{\varepsilon}}(t_{1}, t_{2})}
t_{1}^{-\frac{n}{p_1}-\frac{p_2\varepsilon}{p_1}}
t_{2}^{-\frac{n}{p_2}-\varepsilon}\omega(t_{1},t_{2})dt_{1}dt_{2}\right)^{p}dx
 \\
 \displaystyle
&\hspace{0.2cm}=\displaystyle\frac{\varepsilon^{p_2\varepsilon}\omega_{n}}{p_2 \varepsilon}
\left(\int\limits_{E_{\frac{1}{\varepsilon}}(t_{1}, t_{2})} t_{1}^{-\frac{n}{p_1}-\frac{p_2\varepsilon}{p_1}}
t_{2}^{-\frac{n}{p_2}-\varepsilon}\omega(t_{1},t_{2})dt_{1}dt_{2}\right)^{p}
\\
\displaystyle
&\hspace{0.2cm}=\displaystyle\left(\frac{\sqrt{2}}{2}\varepsilon
\right)^{p_2\varepsilon}\prod_{i=1}^{2}\|f_{i}^\varepsilon
\|_{L^{p_i}(\mathbb{R}^{n})}^{p}\displaystyle\left(\int\limits_{E_{\frac{1}{\varepsilon}}(t_{1}, t_{2})}t_{1}^{-\frac{n}{p_1}-\frac{p_2\varepsilon}{p_1}}
t_{2}^{-\frac{n}{p_2}-\varepsilon}\omega(t_{1},t_{2})dt_{1}dt_{2}\right)^{p}.
\end{array}$$
Therefore,
\begin{eqnarray*}
&&\|\mathcal{H}_{\omega}^2\|_{L^{p_1}(\mathbb{R}^{n})\times L^{p_2}(\mathbb{R}^{n})
\rightarrow
L^p(\mathbb{R}^{n})}\\
&&\hspace{0.2cm}\geq\displaystyle \left(\frac{\sqrt{2}}{2}
\varepsilon\right)^{p_2\varepsilon/p}
\int\limits_{E_{\frac{1}{\varepsilon}}(t_{1}, t_{2})}
t_{1}^{-\frac{n}{p_1}-\frac{p_2\varepsilon}{p_1}}
t_{2}^{-\frac{n}{p_2}-\varepsilon}\omega(t_{1},t_{2})\,dt_{1}\,dt_{2}.
\end{eqnarray*}
Since $(\sqrt{2}\varepsilon/2)^{p_2\varepsilon/p}\rightarrow1$
as  $\varepsilon\rightarrow 0^{+}$, by letting $\varepsilon\rightarrow 0^{+}$, we know that
\begin{eqnarray}\label{2.4}
\|\mathcal{H}_{\omega}^2\|_{L^{p_1}(\mathbb{R}^{n})\times
L^{p_2}(\mathbb{R}^{n})\rightarrow
L^p(\mathbb{R}^{n})}\geq \mathbb{A}_2.
\end{eqnarray}
Combining (\ref{2.1}) and (\ref{2.4}) then finishes the proof.
\end{proof}

Observe that when $n=1$ and $\alpha\in(0,m)$, if we take
$$\omega(\vec{t}):=\frac{1}{\Gamma(\alpha)|(1-t_{1}, \dots, 1-t_{m})|^{m-\alpha}},$$
then
$$\mathcal{H}_{\omega}^m(\vec{f})(x)=x^{-\alpha}I^{m}_{\alpha}\vec{f}(x),\quad x>0,$$
where
$$I^{m}_{\alpha}\vec{f}(x):=\frac{1}{\Gamma(\alpha)}
\int\limits_{0<t_{1},t_{2},...,t_{m}<x}
\frac{\prod_{i=1}^{m}f_{i}(t_{i})}{|(x-t_{1}, \dots, x-t_{m})|^{m-\alpha}}\,d\vec{t}.$$
The operator $I^{m}_{\alpha}$ turns out to be
the one-sided analogous to the one-dimensional multilinear Riesz operator
$\mathcal{I}^{m}_{\alpha}$ studied by
Kenig and Stein in [9], where
$$\mathcal{I}^{m}_{\alpha}\vec{f}(x):=
\int\limits_{t_{1},t_{2},...,t_{m}\in\mathbb{R}}
\frac{\prod_{i=1}^{m}f_{i}(t_{i})}{|(x-t_{1}, \dots, x-t_{m})|^{m-\alpha}}\,d\vec{t},\qquad x\in\rr.$$
As an application of Theorem \ref{t1} we obtain the following
sharp estimate of the boundedness of $I^{m}_{\alpha}$.

\begin{corollary}
Let $0<\alpha<m$. With the same assumptions as in Theorem \ref{t1},
the operator $I^{m}_{\alpha}$ maps $L^{p_1}(\rr)\times \dots \times L^{p_m}(\rr)$ to $ L^p(x^{-p\alpha} dx)$
and the operator norm equals to
$$\frac{1}{\Gamma(\alpha)}\int\limits_{0<t_{1},t_{2},\ldots,t_{m}<1}\left(\prod_{i=1}^{m}
t_{i}^{-1/p_{i}}\right)\frac{1}{|(1-t_{1}, \dots, 1-t_{m})|^{m-\alpha}}\,d\vec{t}.$$
\end{corollary}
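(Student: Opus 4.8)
The plan is to reduce the corollary directly to Theorem~\ref{t1} by identifying $I^m_\alpha$ with the weighted multilinear Hardy operator $\mathcal{H}^m_\omega$ for the specific weight $\omega(\vec t)=\frac{1}{\Gamma(\alpha)|(1-t_1,\dots,1-t_m)|^{m-\alpha}}$, exactly as the discussion preceding the statement already suggests. First I would verify the pointwise identity $\mathcal{H}^m_\omega(\vec f)(x)=x^{-\alpha}I^m_\alpha\vec f(x)$ for $x>0$: starting from the definition of $I^m_\alpha$ over the region $0<t_1,\dots,t_m<x$, I substitute $t_i=s_i x$ (so $dt_i=x\,ds_i$, with each $s_i$ ranging over $(0,1)$) and pull out the powers of $x$. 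The numerator contributes $\prod_i f_i(s_i x)$, the denominator $|(x-s_1 x,\dots,x-s_m x)|^{m-\alpha}=x^{m-\alpha}|(1-s_1,\dots,1-s_m)|^{m-\alpha}$, and the $m$-fold change of variables contributes $x^m$; collecting these gives the factor $x^{m}\cdot x^{-(m-\alpha)}=x^{\alpha}$ times $\mathcal{H}^m_\omega(\vec f)(x)$, which rearranges to the claimed identity.

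Next I would translate the mapping property. The identity shows $|I^m_\alpha\vec f(x)|=x^{\alpha}|\mathcal{H}^m_\omega(\vec f)(x)|$, so for $x>0$ we have the weighted $L^p$ norm
\begin{eqnarray*}
\|I^m_\alpha\vec f\|_{L^p(x^{-p\alpha}\,dx)}
&=&\left(\int_0^\infty |I^m_\alpha\vec f(x)|^p x^{-p\alpha}\,dx\right)^{1/p}\\
&=&\left(\int_0^\infty |\mathcal{H}^m_\omega(\vec f)(x)|^p\,dx\right)^{1/p}
=\|\mathcal{H}^m_\omega(\vec f)\|_{L^p(0,\infty)}.
\end{eqnarray*}
Since everything lives on the half-line $(0,\infty)$ with $n=1$, the weighted norm of $I^m_\alpha$ coincides exactly with the ordinary $L^p$ norm of $\mathcal{H}^m_\omega$. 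Theorem~\ref{t1} (in the form valid on the half-line, or equivalently applied with the understanding that the supporting functions are supported where the operator acts) then gives that $\mathcal{H}^m_\omega$ is bounded $L^{p_1}\times\dots\times L^{p_m}\to L^p$ with operator norm equal to $\mathbb{A}_m=\int_{0<t_1,\dots,t_m<1}\bigl(\prod_i t_i^{-1/p_i}\bigr)\omega(\vec t)\,d\vec t$, and substituting the chosen $\omega$ yields precisely the constant claimed in the corollary.

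The main obstacle I anticipate is bookkeeping rather than deep analysis: I must make sure that the change of variables correctly produces the integration region $E=\{0<t_1,\dots,t_m<1\}$ appearing in $\mathbb{A}_m$, and that the exponents match, noting that with $n=1$ the factor $t_i^{-n/p_i}$ in $\mathbb{A}_m$ becomes $t_i^{-1/p_i}$ and the volume factor $|(1-t_1,\dots,1-t_m)|$ is the Euclidean norm of the $m$-vector of the $1-t_i$. A secondary subtlety is that Theorem~\ref{t1} as stated concerns operators on all of $\mathbb{R}^n$, whereas $I^m_\alpha$ naturally lives on $(0,\infty)$; I would address this by restricting attention to functions supported on the positive half-line (or observing that the sharp-constant computation in the proof of Theorem~\ref{t1}, using the radial test functions $f_i^\varepsilon$, transfers verbatim to the half-line setting). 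Once the identification and these domain issues are handled, both the boundedness and the exact value of the operator norm follow immediately from Theorem~\ref{t1}, so no further estimation is required.
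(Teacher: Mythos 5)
Your proposal is correct and follows exactly the route the paper intends: the corollary is stated as an immediate application of Theorem~\ref{t1} via the identity $\mathcal{H}^m_{\omega}(\vec f)(x)=x^{-\alpha}I^m_{\alpha}\vec f(x)$ with the indicated weight, which you verify by the same change of variables $t_i=s_ix$. Your extra care about the half-line versus $\mathbb{R}$ (noting that the extremal functions $f_i^{\varepsilon}$ transfer to $(0,\infty)$) is a reasonable clarification of a point the paper leaves implicit.
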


Next we extend the result in Theorem \ref{t1} to the product of central Morrey spaces.

\begin{theorem}\label{t2}
Let $1<p<p_i<\infty,$ $1/p=1/p_1+\cdots+1/p_m$, $\lambda=\lambda_1+\cdots+\lambda_m$ and
$-1/p_i\leq \lambda_i<0~(i=1,2,\ldots,m)$.

{\rm (i)} If
\begin{eqnarray}\label{Am}
\widetilde{\mathbb{A}}_{m}:
=\int\limits_{0<t_1,t_2,\ldots,t_m<1}\left(\prod_{i=1}^m t_i^{n\lambda_i}\right)\omega(\vec{t})d\vec{t}<\infty,
\end{eqnarray}
then $\mathcal{H}_\omega^m$ is bounded from $\dot{B}^{p_1,\lambda_1}(\rn)\times\cdots
\times\dot{B}^{p_m,\lambda_m}(\rn)$ to $\dot{B}^{p,\lambda}(\rn)$
with its operator norm not more that $\widetilde{\mathbb{A}}_{m}$.

{\rm (ii)} Assume that $\lambda_1p_1=\cdots=\lambda_mp_m$. In this case the condition
\eqref{Am} is also  necessary  for the boundedness of
$\mathcal{H}_\omega^m:\ \dot{B}^{p_1,\lambda_1}(\rn)\times\cdots
\times\dot{B}^{p_m,\lambda_m}(\rn)\to\dot{B}^{p,\lambda}(\rn)$.
Moreover, $$\|\mathcal{H}_\omega^m\|_{\dot{B}^{p_1,\lambda_1}(\rn)
\times\cdots\times\dot{B}^{p_m,\lambda_m}(\rn)
\rightarrow\dot{B}^{p,\lambda}(\rn)}=\widetilde{\mathbb{A}}_{m}.$$
\end{theorem}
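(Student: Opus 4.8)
The plan is to run exactly the Minkowski--Hölder scheme from the proof of Theorem \ref{t1}, but now measuring everything in the central Morrey norm and bookkeeping the scaling in the radius $R$. As there, I would treat the bilinear case $m=2$ and note that the general $m$ is identical.

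For part (i): fix $R>0$ and estimate $\bigl(\int_{B(0,R)}|\mathcal H_\omega^2(\vec f)(x)|^p\,dx\bigr)^{1/p}$. Minkowski's integral inequality pulls the $L^p(B(0,R))$ norm inside the $t$-integral, and Hölder with the conjugate exponents $p_1/p,\ p_2/p$ (legitimate since $1/p=1/p_1+1/p_2$ gives $p/p_1+p/p_2=1$) separates the factors, leaving $\prod_i\bigl(\int_{B(0,R)}|f_i(t_ix)|^{p_i}dx\bigr)^{1/p_i}$. The one genuinely new ingredient is the dilation $y=t_ix$, which gives $\int_{B(0,R)}|f_i(t_ix)|^{p_i}dx=t_i^{-n}\int_{B(0,t_iR)}|f_i|^{p_i}$; bounding the inner integral by $\|f_i\|_{\dot B^{p_i,\lambda_i}}^{p_i}|B(0,t_iR)|^{1+\lambda_ip_i}$ and using $|B(0,t_iR)|=t_i^n|B(0,R)|$ produces precisely the factor $t_i^{n\lambda_i}\,|B(0,R)|^{(1+\lambda_ip_i)/p_i}\|f_i\|_{\dot B^{p_i,\lambda_i}}$. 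Multiplying over $i$ and applying the exponent identity $\sum_i(1/p_i+\lambda_i)=1/p+\lambda$ collapses the $R$-powers to exactly $|B(0,R)|^{(1+\lambda p)/p}$. Dividing by this weight, taking the supremum over $R$, and recognizing the remaining $t$-integral as $\widetilde{\mathbb A}_2$ yields $\|\mathcal H_\omega^2(\vec f)\|_{\dot B^{p,\lambda}}\le\widetilde{\mathbb A}_2\prod_i\|f_i\|_{\dot B^{p_i,\lambda_i}}$, which is both the boundedness and the upper bound on the operator norm.

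For part (ii): I would test against the power functions $f_i(x)=|x|^{n\lambda_i}$. A polar-coordinate computation shows that for each $i$ the ratio defining the Morrey norm is independent of $R$, so $\|f_i\|_{\dot B^{p_i,\lambda_i}}^{p_i}=\bigl[(\lambda_ip_i+1)\,v_n^{\lambda_ip_i}\bigr]^{-1}$ with $v_n:=|B(0,1)|$; this is a finite positive constant precisely because $\lambda_i>-1/p_i$ strictly, which is forced by the standing convention $\lambda>-1/p$ together with the hypothesis $\lambda_1p_1=\cdots=\lambda_mp_m$ (the degenerate common value $-1$, i.e.\ $\lambda=-1/p$, is exactly the excluded Lebesgue endpoint). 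A direct substitution gives $\mathcal H_\omega^m(\vec f)(x)=\widetilde{\mathbb A}_m\,|x|^{n\lambda}$, and the same power computation evaluates $\||x|^{n\lambda}\|_{\dot B^{p,\lambda}}=(\lambda p+1)^{-1/p}v_n^{-\lambda}$.

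The crux, and the only delicate point, is matching the normalizing constants: one must check $(\lambda p+1)^{-1/p}=\prod_i(\lambda_ip_i+1)^{-1/p_i}$ (the $v_n$-powers cancel automatically since $\lambda=\sum_i\lambda_i$). Writing $\mu$ for the common value $\lambda_ip_i$, the relation $1/p=\sum_i 1/p_i$ gives $\lambda p=\mu$, so $\lambda p+1=\lambda_ip_i+1$ for every $i$ and the identity follows at once; this is exactly where the assumption $\lambda_1p_1=\cdots=\lambda_mp_m$ is indispensable, since without it the test function only delivers a constant multiple of $\widetilde{\mathbb A}_m$ and the sharp constant is lost. Consequently $\|\mathcal H_\omega^m(\vec f)\|_{\dot B^{p,\lambda}}=\widetilde{\mathbb A}_m\prod_i\|f_i\|_{\dot B^{p_i,\lambda_i}}$, so the operator norm is $\ge\widetilde{\mathbb A}_m$; moreover boundedness forces $\widetilde{\mathbb A}_m<\infty$ (otherwise $\mathcal H_\omega^m(\vec f)\equiv\infty$). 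Combining this with the upper bound from part (i) gives both the necessity of \eqref{Am} and the norm equality $\|\mathcal H_\omega^m\|=\widetilde{\mathbb A}_m$.
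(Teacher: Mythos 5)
Your proposal is correct and follows essentially the same route as the paper: Minkowski's integral inequality plus H\"older with exponents $p/p_1+\cdots+p/p_m=1$ and the dilation $y=t_ix$ for the upper bound, and the test functions $f_i(x)=|x|^{n\lambda_i}$ for the lower bound, with the identity $(1+\lambda p)^{1/p}=\prod_i(1+\lambda_ip_i)^{1/p_i}$ under $\lambda_1p_1=\cdots=\lambda_mp_m$ being exactly the step the paper also isolates. The only cosmetic difference is that the paper disposes of the endpoint case $\lambda_i=-1/p_i$ by reducing it to Theorem \ref{t1}, whereas you exclude it via the standing convention $\lambda>-1/p$; both readings are consistent with the statement.
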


\begin{proof}
By similarity, we only give the proof in the case $m=2$.

When $-1/p_i=\lambda_i$, $i=1,2$, then Theorem \ref{t2} is just Theorem \ref{t1}.

Next we consider the case that $-1/p_i<\lambda_i<0$, $i=1,2$.

First, we assume $\widetilde{\mathbb{A}}_2<\infty$.
Since $1/p=1/p_1+1/p_2$, by Minkowski's inequality and H\"{o}lder's inequality, we see that,
for all balls $B=B(0,R)$,
\begin{eqnarray}\label{H2f}
&&\left(\frac{1}{|B|^{1+\lambda p}}\int_B|\mathcal{H}_{\omega}^2(\vec{f})(x)|^pdx\right)^{1/p}\nonumber\\
& &\hspace{0.2cm}\leq \int\limits_{0<t_1,t_2<1}\left(\frac{1}{|B|^{1+\lambda p}}\int_B\Big|\prod_{i=1}^2f_i(t_i x)\Big|^pdx\right)^{1/p}\omega(\vec{t})d\vec{t}\nonumber\\
& &\hspace{0.2cm}\leq \int\limits_{0<t_1,t_2<1}\prod_{i=1}^2\left(\frac{1}{|B|^{1+\lambda_i p_i}}\int_B\Big|f_i(t_i x)\Big|^{p_i}dx\right)^{1/p_i}\omega(\vec{t})d\vec{t}\nonumber\\
&&\hspace{0.2cm}= \int\limits_{0<t_1,t_2<1}t_1^{n\lambda_1}t_2^{n\lambda_2}\prod_{i=1}^2\left(\frac{1}{|t_i B|^{1+\lambda_i p_i}}\int_{t_i B}\Big|f_i( x)\Big|^{p_i}dx\right)^{1/p_i}\omega(\vec{t})d\vec{t}\nonumber\\
&&\hspace{0.2cm}\le \|f_1\|_{\dot{B}^{p_1,\lambda_1}}\|f_2\|_{\dot{B}^{p_2,\lambda_2}}
\int\limits_{0<t_1,t_2<1}t_1^{n\lambda_1}t_2^{n\lambda_2}\omega(\vec{t})d\vec{t}.
\end{eqnarray}
This means that $\|\mathcal{H}_\omega^2\|_{\dot{B}^{p_1,\lambda_1}(\rn)\times\dot{B}^{p_2,\lambda_2}(\rn)
\rightarrow\dot{B}^{p,\lambda}(\rn)}\le\widetilde{\mathbb{A}}_{2}.$

For the necessity when $\lambda_1p_1=\lambda_2p_2$, let $f_1(x):=|x|^{n\lambda_1}$ and $f_2(x):=|x|^{n\lambda_2}$ for all $x\in\rn\setminus\{0\}$, and $f_1(0)=f_2(0):=0$.
Then for any $B:=B(0,R)$,
\begin{eqnarray*}
\left(\frac{1}{|B|^{1+\lambda_i p_i}}\int_B|f_i(x)|^{p_i}dx\right)^{1/p_i}&=&\left(\frac{1}{|B|^{1+\lambda_i p_i}}\int_B|x|^{n\lambda_i p_i}dx\right)^{1/p_i}=
\left(\frac{\omega_n}{n}\right)^{-\lambda_i}\left(\frac{1}{1+\lambda_ip_i}\right)^{1/p_i}.
\end{eqnarray*}
Hence  $\|f_i\|_{\dot{B}^{p_i,\lambda_i}}
=(\omega_n/n)^{-\lambda_i}(\frac{1}{n+n\lambda_ip_i})^{1/p_i}$,
$i=1,2$. Since $\lambda=\lambda_1+\lambda_2$ and $-1/p_i< \lambda_i<0, 1<p<p_i<\infty,~i=1,2$, we have
\begin{eqnarray}
&&\left(\frac{1}{|B|^{1+\lambda p}}\int_B|\mathcal{H}_{\omega}^2(\vec{f})(x)|^{p}dx\right)^{1/p} \nonumber\\
&&\hspace{0.2cm}=\left(\frac{1}{|B|^{1+\lambda p}}\int_B|x|^{n\lambda p}\,dx\right)^{1/p}\int\limits_{0<t_1,t_2<1}t_1^{n\lambda_1}
t_2^{n\lambda_2}\omega(\vec{t})d\vec{t} \nonumber\\
&&\hspace{0.2cm}=\left(\frac{\omega_n}{n}\right)^{-\lambda}\left(\frac{1}{1+\lambda p}\right)^{1/p}\int\limits_{0<t_1,t_2<1}t_1^{n\lambda_1}
t_2^{n\lambda_2}\omega(\vec{t})d\vec{t} \nonumber\\
&&\hspace{0.2cm}= \|f_1\|_{\dot{B}^{p_1,\lambda_1}}
\|f_2\|_{\dot{B}^{p_2,\lambda_2}} \frac{(1+\lambda_1p_1)^{1/p_1}
(1+\lambda_2p_2)^{1/p_2}}{(1+\lambda p)^{1/p}}
\int\limits_{0<t_1,t_2<1}t_1^{n\lambda_1}t_2^{n\lambda_2}\omega(\vec{t})d\vec{t}. \nonumber\\
&&\hspace{0.2cm}= \|f_1\|_{\dot{B}^{p_1,\lambda_1}}
\|f_2\|_{\dot{B}^{p_2,\lambda_2}}
\int\limits_{0<t_1,t_2<1}t_1^{n\lambda_1}t_2^{n\lambda_2}\omega(\vec{t})d\vec{t}, \label{eq2-6}
\end{eqnarray}
since $\lambda_1p_1=\lambda_2p_2.$
Then,
$\widetilde{\mathbb{A}}_2\leq \|\mathcal{H}_{\omega}^2\|_{\dot{B}^{p_1,\lambda_1}\times
\dot{B}^{p_2,\lambda_2}\rightarrow\dot{B}^{p,\lambda}}<\infty.$
Combining  (\ref{H2f}) and (\ref{eq2-6}) then concludes the proof.
This finishes the proof of  the Theorem \ref{t2}.
\end{proof}

We remark that Theorem \ref{t2} when $m=1$ goes back to \cite[Theorem 2.1]{FZW}.

A corresponding conclusion of $I^\alpha_m$ is also true.

\begin{corollary}
Let $0<\alpha<m$. With the same assumptions as in Theorem \ref{t2},
the operator $I^{m}_{\alpha}$ maps $\dot{B}^{p_1,\lambda_1}(\rr)\times \cdots\times\dot{B}^{p_m,\lambda_m}(\rr)$ to $\dot{B}^{p,\lambda}(x^{-p\alpha}dx)$ with
the operator norm not more than
$$\frac{1}{\Gamma(\alpha)}\int\limits_{0<t_{1},t_{2},\ldots,t_{m}<1}\left(\prod_{i=1}^{m}
t_{i}^{\lambda_{i}}\right)\frac{1}{|(1-t_{1}, \dots, 1-t_{m})|^{m-\alpha}}\,d\vec{t}.$$
In particular, when  $\lambda_1p_1=\cdots=\lambda_mp_m$, then
the operator norm of $I^{m}_{\alpha}$ equals to the above quantity.
\end{corollary}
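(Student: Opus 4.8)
The plan is to deduce this corollary directly from Theorem \ref{t2}, in exactly the way the earlier corollary was deduced from Theorem \ref{t1}, by exploiting the pointwise identity between $I^m_\alpha$ and a specific weighted multilinear Hardy operator. First I would specialize to $n=1$ and take the weight
$$\omega(\vec t):=\frac{1}{\Gamma(\alpha)\,\abs{(1-t_1,\dots,1-t_m)}^{m-\alpha}},\qquad 0<t_1,\dots,t_m<1,$$
which is precisely the weight used above the statement to relate $\mathcal{H}^m_\omega$ and $I^m_\alpha$. Since $0<\alpha<m$ we have $m-\alpha\in(0,m)$, so after the substitution $s_i=1-t_i$ the singularity of $\abs{(s_1,\dots,s_m)}^{-(m-\alpha)}$ at the origin is integrable over the unit cube; hence $\omega$ is an admissible integrable weight and $\mathcal{H}^m_\omega$ is well defined. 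With this choice one has the identity, already recorded above the statement,
$$I^m_\alpha\vec f(x)=x^\alpha\,\mathcal{H}^m_\omega(\vec f)(x),\qquad x>0.$$

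The key observation is that the weight $x^{-p\alpha}$ defining the target space $\dot B^{p,\lambda}(x^{-p\alpha}\,dx)$ is exactly the one that annihilates the factor $x^\alpha$. Reading the weighted norm as
$$\|g\|_{\dot B^{p,\lambda}(x^{-p\alpha}dx)}:=\sup_{R>0}\left(\frac{1}{\abs{B(0,R)}^{1+\lambda p}}\int_{B(0,R)}\abs{g(x)}^p\,x^{-p\alpha}\,dx\right)^{1/p},$$
I would insert $g=I^m_\alpha\vec f$ and use the identity to get, for every ball $B(0,R)$,
$$\int_{B(0,R)}\abs{I^m_\alpha\vec f(x)}^p\,x^{-p\alpha}\,dx=\int_{B(0,R)}\abs{x^\alpha\,\mathcal{H}^m_\omega(\vec f)(x)}^p\,x^{-p\alpha}\,dx=\int_{B(0,R)}\abs{\mathcal{H}^m_\omega(\vec f)(x)}^p\,dx.$$
Dividing by $\abs{B(0,R)}^{1+\lambda p}$ and taking the supremum over $R>0$ yields the per-input isometric identity $\|I^m_\alpha\vec f\|_{\dot B^{p,\lambda}(x^{-p\alpha}dx)}=\|\mathcal{H}^m_\omega(\vec f)\|_{\dot B^{p,\lambda}(\rr)}$ for every admissible $\vec f$. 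Taking the supremum over $\vec f$ in the unit ball of the product space then shows that the operator norm of $I^m_\alpha$ into the weighted space coincides with the operator norm of $\mathcal{H}^m_\omega$ into $\dot B^{p,\lambda}(\rr)$.

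It remains only to invoke Theorem \ref{t2}. Part (i) gives that $\mathcal{H}^m_\omega$ is bounded from $\dot B^{p_1,\lambda_1}(\rr)\times\cdots\times\dot B^{p_m,\lambda_m}(\rr)$ into $\dot B^{p,\lambda}(\rr)$ with norm at most $\widetilde{\mathbb{A}}_{m}$; since $n=1$ forces $t_i^{n\lambda_i}=t_i^{\lambda_i}$, this constant reads
$$\widetilde{\mathbb{A}}_{m}=\frac{1}{\Gamma(\alpha)}\int\limits_{0<t_1,\dots,t_m<1}\Big(\prod_{i=1}^m t_i^{\lambda_i}\Big)\frac{1}{\abs{(1-t_1,\dots,1-t_m)}^{m-\alpha}}\,d\vec t,$$
which is exactly the asserted quantity. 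Transporting this bound through the norm identity of the previous paragraph yields the claimed upper bound for $I^m_\alpha$. Under the additional hypothesis $\lambda_1p_1=\cdots=\lambda_mp_m$, part (ii) of Theorem \ref{t2} upgrades the inequality to the equality $\|\mathcal{H}^m_\omega\|=\widetilde{\mathbb{A}}_{m}$, and this transfers verbatim to $I^m_\alpha$. I do not anticipate a genuine obstacle: the only points needing care are the bookkeeping of the weight---verifying that $x^{-p\alpha}$ cancels $\abs{x^\alpha}^p$ inside \emph{every} ball, so that the norm relation is truly isometric rather than merely comparable---and the integrability of $\omega$ near $t_1=\cdots=t_m=1$, both of which are routine.
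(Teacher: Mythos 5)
Your proposal is correct and is exactly the derivation the paper intends: the corollary is stated as an immediate consequence of Theorem \ref{t2} via the identity $\mathcal{H}^m_{\omega}(\vec f)(x)=x^{-\alpha}I^m_{\alpha}\vec f(x)$ with the weight $\omega(\vec t)=\Gamma(\alpha)^{-1}|(1-t_1,\dots,1-t_m)|^{-(m-\alpha)}$, and your cancellation of $|x^{\alpha}|^p$ against $x^{-p\alpha}$ ball by ball is the right way to make the norm transfer isometric. The only cosmetic point (shared with the paper) is that the identity lives on $x>0$, so the weighted central Morrey norm should be read over that half-line.
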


\begin{remark}
Notice that in the necessary part of Theorem \ref{t2},
we need an additional condition $\lambda_1p_1=\cdots=\lambda_mp_m$.
In the case of Lebesgue spaces, this condition
holds true automatically. For the case of
Morrey spaces, such
condition has known to be the necessary and sufficient condition for the
interpolation properties of Morrey spaces; see, for example, \cite{LR}.
\end{remark}


\section{Commutators of weighted multilinear Hardy operators}

In this section, we consider the sharp estimates of the multilinear
commutator generated by $\mathcal{H}_{\omega}^m$ with symbols in $\dot{CMO}^q(\rn)$.
Before presenting the main results of this section, we first introduce the following well-known Riemann-Lebesgue-type Lemma, which plays a key role in the below proof.
For completeness, we give a detailed proof.

\begin{lemma}\label{LA}
Let $m\in\mathbb{N}$ and $\omega:\,[a,b]^m\to[0,\infty)$ be an
integrable function. Then
$$\lim_{r\to\infty}\int_{[a,b]^m}\omega(t_1,\cdots,t_m)\,\prod_{i\in E}
\sin(\pi r t_i)\,dt_1\,\cdots\,dt_m=0,$$ where $E$ is an arbitrary
nonempty subset of $\{1,\cdots,m\}$.
\end{lemma}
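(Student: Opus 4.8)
The statement is a multidimensional Riemann–Lebesgue lemma. The plan is to reduce the general case to the classical one-dimensional Riemann–Lebesgue lemma by approximating $\omega$ by finite linear combinations of tensor products of indicator functions (i.e.\ step functions), where the approximation is in the $L^1([a,b]^m)$ norm. First I would record the trivial but crucial bound: since $\abs{\sin(\pi r t_i)}\le 1$,
\[
\left|\int_{[a,b]^m}\omega(\vec t)\prod_{i\in E}\sin(\pi r t_i)\,d\vec t
-\int_{[a,b]^m}g(\vec t)\prod_{i\in E}\sin(\pi r t_i)\,d\vec t\right|
\le\int_{[a,b]^m}\abs{\omega(\vec t)-g(\vec t)}\,d\vec t
\]
for every $g$, uniformly in $r$. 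Hence it suffices to prove the limit is $0$ for $g$ ranging over a class dense in $L^1([a,b]^m)$: given $\varepsilon>0$, choose $g$ with $\|\omega-g\|_{L^1}<\varepsilon/2$, show the $g$-integral tends to $0$, and conclude the $\omega$-integral is eventually below $\varepsilon$.

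The natural dense class is the span of products of indicators $\mathbf 1_{[a_1,b_1]}(t_1)\cdots\mathbf 1_{[a_m,b_m]}(t_m)$, i.e.\ indicators of subrectangles of $[a,b]^m$, which are dense in $L^1$ by standard measure theory. For such a product the integral factorizes:
\[
\int_{[a,b]^m}\prod_{j=1}^m\mathbf 1_{[a_j,b_j]}(t_j)\,\prod_{i\in E}\sin(\pi r t_i)\,d\vec t
=\prod_{i\in E}\int_{a_i}^{b_i}\sin(\pi r t_i)\,dt_i
\ \prod_{j\notin E}(b_j-a_j).
\]
Since $E$ is nonempty, at least one factor of the first product is present, and each such factor is
\[
\int_{a_i}^{b_i}\sin(\pi r t_i)\,dt_i
=\frac{\cos(\pi r a_i)-\cos(\pi r b_i)}{\pi r},
\]
which is $O(1/r)$ and hence tends to $0$ as $r\to\infty$; the remaining factors are bounded constants. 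So the whole expression vanishes in the limit. By linearity the same holds for any finite linear combination of such indicators, which is exactly the dense class $g$ we need.

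The one genuinely delicate point — really the only obstacle — is bookkeeping the density and the uniform-in-$r$ nature of the approximation, so that the two limit processes ($r\to\infty$ and the $L^1$ approximation) do not interfere. The resolution is precisely the uniform bound displayed above: the error term is controlled by $\|\omega-g\|_{L^1}$ \emph{independently of $r$}, so one may fix $g$ first and then send $r\to\infty$ without an interchange-of-limits issue. One can alternatively bypass the density argument by invoking Fubini to integrate out one variable $i\in E$ at a time, applying the classical one-dimensional Riemann–Lebesgue lemma to the inner integral (with the remaining variables frozen) and dominated convergence on the outer variables; this is cleaner but uses the $L^1$ theorem as a black box. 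I would present the step-function approximation version, as it is self-contained and matches the ``detailed proof'' the authors promise.
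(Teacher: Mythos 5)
Your proof is correct and follows essentially the same route as the paper: approximate $\omega$ in $L^1$ by a step function supported on a grid of subrectangles, use the bound $|\sin(\pi r t_i)|\le 1$ to control the error uniformly in $r$, and compute the step-function integral explicitly to get an $O(1/r)$ decay from the factor $\int_{a_i}^{b_i}\sin(\pi r t_i)\,dt_i$. The only cosmetic difference is that the paper builds its approximant as a lower Darboux-type sum (using the minimum of $\omega$ on each rectangle), while you invoke the density of step functions in $L^1$, which is if anything the more robust formulation.
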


\begin{proof}
For simplicity, we only give the proof for the case that $m=2$ and
$E=\{1\}$, namely, to show
$$\lim_{r\to\infty}\int_{[a,b]^2}\omega(t_1,t_2)\,
\sin(\pi r t_1)\,dt_1\,dt_2=0.$$

Since $\omega$ is integrable, for any $\varepsilon>0$, there exists a
partition
$\{I_i\times
J_j:\
i=1,\cdots,k\hspace{0.3cm}\mathrm{and}\hspace{0.3cm}j=1,\cdots,l\} $
such that $I_i=[a_{I_i},b_{I_i}]$, $J_j=[a_{J_j},b_{J_j}]$, $[a,b]=\cup_{i=1}^k I_i=\cup_{j=1}^l J_j$, $I_i\cap I_j=\emptyset=J_i\cap J_j$
if $i\neq j$, and
$$0\le \int_a^b\int_a^b \omega(t_1,t_2)\,dt_1\,dt_2-\sum_{i=1}^k
\sum_{j=1}^m m_{ij}|I_i||J_j|<\varepsilon/2,$$
where $m_{ij}$ is the minimum value of $\omega$ on $I_i\times J_j$.
Let
$$g(t_1,t_2):= \sum_{i=1}^k
\sum_{j=1}^m m_{ij}\chi_{I_i}(t_1)\chi_{J_j}(t_2),\quad t_1,t_2\in[a,b].$$
Then
 $$\int_a^b\int_a^b g(t_1,t_2)\,dt_1\,dt_2=\sum_{i=1}^k
\sum_{j=1}^m m_{ij}|I_i||J_j|$$
and
$$0\le \int_a^b\int_a^b [\omega(t_1,t_2)-g(t_1,t_2)]\,dt_1\,dt_2<\varepsilon/2.$$
It follows from $\omega-g\ge 0$ that
\begin{eqnarray*}
&&\left|\int_{[a,b]^2}\omega(t_1,t_2)\,
\sin(\pi r t_1)\,dt_1\,dt_2\right|\\
&&\hspace{0.2cm}\le \left|\int_{[a,b]^2}[\omega(t_1,t_2)-g(t_1,t_2)]\,
\sin(\pi  r t_1)\,dt_1\,dt_2\right|+\left|\int_{[a,b]^2}g(t_1,t_2)\,
\sin(\pi r t_1)\,dt_1\,dt_2\right|\\
&&\hspace{0.2cm}\le \int_{[a,b]^2}[\omega(t_1,t_2)-g(t_1,t_2)]\,dt_1\,dt_2+\left|\int_{[a,b]^2}g(t_1,t_2)\,
\sin(\pi r t_1)\,dt_1\,dt_2\right|\\
&&\hspace{0.2cm}\le \varepsilon/2+\left|\frac{1}{\pi r}\sum_{i=1}^k
\sum_{j=1}^m m_{ij}|J_j|[\cos(\pi ra_{I_i})-\cos(\pi rb_{I_i})]\right|.
\end{eqnarray*}
Choosing $r$ large enough such that
$$\left|\frac{1}{\pi r}\sum_{i=1}^k
\sum_{j=1}^m m_{ij}|J_j|[\cos(\pi ra_{I_i})-\cos(\pi rb_{I_i})]\right|<\varepsilon/2,$$
we then know that
$$\left|\int_{[a,b]^2}\omega(t_1,t_2)\,
\sin(\pi r t_1)\,dt_1\,dt_2\right|<\varepsilon.$$
This finishes the proof.
\end{proof}

Now we recall the definition for the multilinear
version of the commutator of the weighted Hardy operators.
Let $m\geq 2$, $\omega :\,
[0,1]\times[0,1]^m\rightarrow [0,\infty)$
be an integrable function, and $b_{i}\ (1\leq i\leq m)$
be locally integrable functions on $\rn$. We define
$$\mathcal{H}_{\omega}^{\vec{b}}(\vec{f})(x):=\int\limits_{0<t_{1},t_{2},...,t_{m}<1}
\left(\prod_{i=1}^{m}f_{i}(t_{i}x)\right)
\left(\prod_{i=1}^{m}(b_{i}(x)-b_{i}(t_{i}x))\right)\omega(\vec{t})\,d\vec{t},\quad x\in \mathbb{ R}^n.$$

In what follows, we set
$$\mathbb{B}_{m}:=\int\limits_{0<t_{1},t_{2}<,...,<t_{m}<1}
\left(\prod_{i=1}^{m}t_{i}^{n\lambda_i}\right)
\omega(\vec{t})\prod_{i=1}^{m}\log\frac{1}{t_{i}}\,d\vec{t}$$
and
$$\mathbb{C}_{m}:=\int\limits_{0<t_{1},t_{2}<,...,<t_{m}<1}
\left(\prod_{i=1}^{m}t_{i}^{n\lambda_i}\right)
\omega(\vec{t})\prod_{i=1}^{m}\log\frac{2}{t_{i}}\,d\vec{t}.$$
Then we have the following multilinear generalization of Theorem B.

\begin{theorem}\label{t3}
Let $1<p<p_i<\infty, 1<q_i<\infty$, $-1/p_i<\lambda_i<0$, $i=1,\ldots, m$, such that $1/p=1/p_1+\cdots+1/p_m+1/q_1+\cdots+1/q_m$, $\lambda=\lambda_1+\cdots+\lambda_m $. Assume further that $\omega$ is a non-negative
integrable function on $[0,1]\times\cdots \times [0,1]$.

{\rm (i)} If $\mathbb{C}_{m}<\infty,$ then $\mathcal{H}_{\omega}^{\vec{b}} $ is bounded from
$\dot{B}^{p_1,\lambda_1}(\mathbb{R}^{n})\times \cdots \times \dot{B}^{p_m,\lambda_m}(\mathbb{R}^{n})$
to $ \dot{B}^{p,\lambda}(\mathbb{R}^{n})$
for all $\vec{b}=(b_1,b_2,\ldots,b_m)\in \dot{\mathrm{CMO}}^{q_1}(\mathbb{R}^{n})
\times\cdots\times\dot{\mathrm{CMO}}^{q_m}(\mathbb{R}^{n})$.

{\rm (ii)} Assume that $\lambda_1p_1=\cdots=\lambda_mp_m$. In this case the condition
$\mathbb{C}_{m}<\infty$ in (i) is also necessary.
\end{theorem}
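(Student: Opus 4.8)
The plan is to prove the two implications separately, grafting the symbol differences onto the scheme already used for Theorem~\ref{t2}. For the sufficiency in (i), I would fix a ball $B=B(0,R)$ and apply Minkowski's integral inequality to pull the $\dot{B}^{p,\lambda}$-average inside the $\vec t$-integral, exactly as in \eqref{H2f}. This reduces everything to estimating, for each fixed $\vec t$, the quantity $\frac{1}{|B|^{(1+\lambda p)/p}}\big(\int_B\prod_{i=1}^m|f_i(t_ix)|^p|b_i(x)-b_i(t_ix)|^p\,dx\big)^{1/p}$. Since $1/p=\sum_i 1/p_i+\sum_i 1/q_i$, the natural move is to apply the generalized H\"older inequality with the $2m$ exponents $p_1,\dots,p_m,q_1,\dots,q_m$, splitting the display into the product of the $m$ factors $\big(\frac{1}{|B|^{1+\lambda_i p_i}}\int_B|f_i(t_ix)|^{p_i}\,dx\big)^{1/p_i}$ and the $m$ factors $\big(\frac{1}{|B|}\int_B|b_i(x)-b_i(t_ix)|^{q_i}\,dx\big)^{1/q_i}$; one checks that the $|B|$-powers distribute consistently because $(1+\lambda p)/p=\sum_i(1+\lambda_i p_i)/p_i+\sum_i 1/q_i$.

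The first group of factors is handled precisely as in \eqref{H2f}: a dilation $y=t_ix$ turns each into $t_i^{n\lambda_i}$ times the $\dot{B}^{p_i,\lambda_i}$-norm of $f_i$. The crux of (i) is therefore the second group, for which I would establish the central-BMO dilation estimate
\[
\left(\frac{1}{|B|}\int_B|b_i(x)-b_i(t_ix)|^{q_i}\,dx\right)^{1/q_i}\lesssim \log\frac{2}{t_i}\,\|b_i\|_{\dot{\mathrm{CMO}}^{q_i}}.
\]
This follows from the triangle inequality $|b_i(x)-b_i(t_ix)|\le|b_i(x)-b_{i,B}|+|b_{i,B}-b_{i,t_iB}|+|b_{i,t_iB}-b_i(t_ix)|$: the first and third terms are controlled directly by the $\dot{\mathrm{CMO}}^{q_i}$-norm (the third after the dilation $y=t_ix$), while the middle, purely numerical term is bounded by a telescoping/dyadic argument giving $|b_{i,B}-b_{i,t_iB}|\lesssim\log\frac{2}{t_i}\|b_i\|_{\dot{\mathrm{CMO}}^{q_i}}$, the constant $2$ absorbing the regime $t_i\to1$. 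Multiplying the two groups, the $|B|$-powers cancel, the surviving $t_i$-powers assemble into $\prod_i t_i^{n\lambda_i}$, and the $\log$-factors into $\prod_i\log\frac{2}{t_i}$; integrating against $\omega$ reproduces exactly $\mathbb{C}_m\prod_i\|f_i\|_{\dot{B}^{p_i,\lambda_i}}\prod_i\|b_i\|_{\dot{\mathrm{CMO}}^{q_i}}$, proving boundedness.

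For the necessity in (ii) I would first recover a partial condition by the canonical test pair $b_i(x)=\log|x|\in\dot{\mathrm{CMO}}^{q_i}(\rn)$ and $f_i(x)=|x|^{n\lambda_i}\in\dot{B}^{p_i,\lambda_i}(\rn)$. Because $\log$ converts dilation into translation, $b_i(x)-b_i(t_ix)=\log\frac{1}{t_i}$ is independent of $x$, so $\mathcal{H}_\omega^{\vec b}(\vec f)(x)=|x|^{n\lambda}\,\mathbb{B}_m$. Here the standing hypothesis $\lambda_1p_1=\cdots=\lambda_mp_m$ is exactly what is needed: it guarantees each $|x|^{n\lambda_i}$ lies in $\dot{B}^{p_i,\lambda_i}$ and makes the $\dot{B}^{p,\lambda}$-norm of $|x|^{n\lambda}$ factor cleanly through the individual norms, just as in the computation \eqref{eq2-6}. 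Assuming the commutator is bounded then forces $\mathbb{B}_m<\infty$.

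The main obstacle is upgrading $\mathbb{B}_m<\infty$ to the stated $\mathbb{C}_m<\infty$. Writing $\prod_i\log\frac{2}{t_i}=\prod_i(\log2+\log\frac1{t_i})$ expands $\mathbb{C}_m$ as a sum, over subsets $S\subseteq\{1,\dots,m\}$, of $(\log2)^{m-|S|}J_S$ with $J_S:=\int\prod_i t_i^{n\lambda_i}\,\omega(\vec t)\prod_{i\in S}\log\frac1{t_i}\,d\vec t$; the log symbols above give only the top term $J_{\{1,\dots,m\}}=\mathbb{B}_m$, and in the multilinear setting (unlike the one-dimensional case, where integrability of $\omega$ already promotes $\mathbb{B}$ to $\mathbb{C}$) the finiteness of $\mathbb{B}_m$ does not by itself force the mixed terms $J_S$ to be finite, since the missing factors $\log\frac1{t_i}$ vanish as $t_i\to1$ and thus decouple the coordinates. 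This is precisely where I expect to invoke the Riemann--Lebesgue-type Lemma~\ref{LA}: for a given $S$ I would keep $b_i=\log|\cdot|$ for $i\in S$ but replace $b_i$ on $S^{c}$ by symbols oscillating in $\log|x|$, expand $\prod_{i}(b_i(x)-b_i(t_ix))$, and show that every term carrying at least one oscillatory factor (indexed by a nonempty $E\subseteq S^c$) contributes an integral $\int\omega\prod_{i\in E}\sin(\pi r t_i)\,d\vec t\to0$ as the frequency $r\to\infty$, leaving the oscillation-free survivor comparable to $J_S$. Carrying this extraction out for all $S$, so that boundedness yields each $J_S<\infty$ and hence $\mathbb{C}_m<\infty$, is the delicate point of the argument and the step I would spend the most care on.
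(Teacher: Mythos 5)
Your proposal is correct and, in substance, follows the paper's route, but the organization of the sufficiency part is genuinely different. The paper proves (i) by first expanding $\prod_i(b_i(x)-b_i(t_ix))$ via the insertion of the averages $b_{i,B}$ and $b_{i,t_iB}$, which for $m=2$ produces six separate terms $I_1,\dots,I_6$, each then estimated by its own application of H\"older's inequality with auxiliary exponents $s_1,s_2,s$. You instead prove once the single dilation estimate
\[
\Bigl(\frac{1}{|B|}\int_B|b_i(x)-b_i(t_ix)|^{q_i}\,dx\Bigr)^{1/q_i}\lesssim \Bigl(\log\frac{2}{t_i}\Bigr)\|b_i\|_{\dot{\mathrm{CMO}}^{q_i}},
\]
and then apply the generalized H\"older inequality with the $2m$ exponents $p_1,\dots,p_m,q_1,\dots,q_m$ in one stroke; the exponent bookkeeping $(1+\lambda p)/p=\sum_i(1+\lambda_ip_i)/p_i+\sum_i 1/q_i$ checks out, and your dilation estimate is exactly the telescoping fact the paper uses inside its bound for $I_3$. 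This packaging is cleaner and scales to general $m$ without the combinatorial explosion of cross terms, at the price of hiding the individual interactions that the paper displays explicitly. For the necessity in (ii) your plan coincides with the paper's: expand $\prod_i\log\frac{2}{t_i}$ over subsets $S$, obtain the full-log term $\mathbb{B}_m$ from the symbols $b_i=\log|\cdot|$ and $f_i=|\cdot|^{n\lambda_i}$ (with $\lambda_1p_1=\cdots=\lambda_mp_m$ entering exactly as in \eqref{eq2-6}), and recover each mixed term $J_S$ by replacing the symbols off $S$ with truncated oscillatory functions $\chi_{[B(0,R/2)]^c}\sin(\pi r|\cdot|)$ and killing the $b_i(t_ix)$ contributions via Lemma \ref{LA} with $r|x|$ as the large parameter — you correctly identify this as the delicate step, and it is precisely where the paper spends its effort (the cases $\mathbb{A}_2,\mathbb{D},\mathbb{E}$).
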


\begin{remark}  It is easy to verify that
condition $\rm(ii)$ in Theorem \ref{t3}
is weaker than the condition (\ref{Am}) in Theorem \ref{t2}.
\end{remark}

\begin{proof}[Proof]
By similarity, we only consider the case that $m=2$.

We first show (i). That is, we assume  $\mathbb{C}_{2}<\infty$ and show that
$$\|\mathcal{H}_{\omega}^{\vec{b}}\|_{
\dot{B}^{p_1,\lambda_1}(\mathbb{R}^{n})\times\dot{B}^{p_2,\lambda_2}(\mathbb{R}^{n})
\rightarrow \dot{B}^{p,\lambda}(\mathbb{R}^{n})}<\infty$$
whenever $\vec b=(b_1, b_2)\in\dot{\mathrm {CMO}}^{q_1}(\rn)\times\dot{\mathrm {CMO}}^{q_2}(\rn)$.
By Minkowski's inequality we have
\begin{eqnarray*}
&& \Big(\frac{1}{|B|}\int_B |\mathcal{H}_{\omega}^{\vec{b}}(\vec{f})(x)|^p\Big)^{1/p}\\
&&\displaystyle\leq \Big(\frac{1}{|B|}\int_B\Big(\int_0^1\int_0^1\prod_{i=1}^{2}|f_i(t_i x)|\prod_{i=1}^{2}|b_i(x)-b_i(t_ix)|\omega(t_1,t_2)dt_1dt_2\Big)^pdx\Big)^{1/p}\\
&&\leq \int_0^1\int_0^1\Big(\frac{1}{|B|}\int_B\Big(\prod_{i=1}^{2}|f_i(t_i x)|\prod_{i=1}^{2}|b_i(x)-b_i(t_ix)|\Big)^pdx\Big)^{1/p}\omega(t_1,t_2)dt_1dt_2\\
&&=I_1+I_2+I_3+I_4+I_5+I_6,
\end{eqnarray*}
where
\begin{eqnarray*}
&&I_1:=\displaystyle \int\limits_{0<t_{1},t_{2}<1}\left(\frac{1}{|B|}\int_{B}
\left(\Big(\prod_{i=1}^{2}|f_{i}(t_{i}x)|\Big)\Big(\prod_{i=1}^{2}|b_{i}(x)
-b_{i, B}|\Big)\right)^p\,dx\right)^{\frac 1p}\,\omega(\vec{t})\,d\vec{t},\\
&&I_2:=\displaystyle \int\limits_{0<t_{1},t_{2}<1}\left(\frac{1}{|B|}\int_{B}
\left(\Big(\prod_{i=1}^{2}|f_{i}(t_{i}x)|\Big)\Big(\prod_{i=1}^{2}|b_{i}(t_ix)
-b_{i, t_iB}|\Big)\right)^p\,dx\right)^{\frac 1p}\,\omega(\vec{t})\,d\vec{t},\\
&&I_3:=\displaystyle \int\limits_{0<t_{1},t_{2}<1}\left(\frac{1}{|B|}\int_{B}
\left(\Big(\prod_{i=1}^{2}|f_{i}(t_{i}x)|\Big)\Big(\prod_{i=1}^{2}|b_{i,B}
-b_{i, t_iB}|\Big)\right)^p\,dx\right)^{\frac 1p}\,\omega(\vec{t})\,d\vec{t},\\
&&I_4:=\displaystyle
\int\limits_{0<t_{1},t_{2}<1}\left(\frac{1}{|B|}\int_{B}
\left(\Big(\prod_{i=1}^{2}|f_{i}(t_{i}x)|\Big)\Big(\sum_{D(i, j)}
|b_{i}(x)-b_{i, B}||b_{j, B}-b_{j, t_{j}B}|\Big)\right)^p\,dx\right)^{\frac 1p}\,\omega(\vec{t})\,d\vec{t},\\
&&I_5:=\displaystyle
\int\limits_{0<t_{1},t_{2}<1}\left(\frac{1}{|B|}\int_{B}
\left(\Big(\prod_{i=1}^{2}|f_{i}(t_{i}x)|\Big)\Big(\sum_{D(i, j)}
|b_{i}(x)-b_{i, B}||b_{j}(t_j x)-b_{j, t_{j}B}|\Big)\right)^p\,dx\right)^{\frac 1p}\,\omega(\vec{t})\,d\vec{t},\\
&&I_6:=\displaystyle
\int\limits_{0<t_{1},t_{2}<1}\left(\frac{1}{|B|}\int_{B}
\left(\Big(\prod_{i=1}^{2}|f_{i}(t_{i}x)|\Big)\Big(\sum_{D(i, j)}
|b_{i,B}-b_{i,t_i B}||b_{j}(t_j x)-b_{j, t_{j}B}|\Big)\right)^p\,dx\right)^{\frac 1p}\,\omega(\vec{t})\,d\vec{t},
\end{eqnarray*}
and
\[
D(i, j):=\{(i, j)| (1, 2); (2, 1)\},\quad\quad ~b_{i, B}:=\frac{1}{|B|}\int_{B}b_{i},~\quad i=1, 2.
\]

Choose $p<s_{1}<\infty, p<s_{2}<\infty $ such that $1/s_1=1/p_1+1/q_1$,
$1/s_2=1/p_2+1/q_2$. Then by H\"{o}lder's inequality, we know that
\begin{eqnarray*}
\displaystyle I_{1}&\leq &\displaystyle
\int\limits_{0<t_{1},t_{2}<1}\prod_{i=1}^{2}\left(\frac{1}{|B|}\int_{B}
\left|f_{i}(t_{i}x)\right|^{p_{i}}dx\right)^{1/p_{i}}\prod_{i=1}^{2}\left(\frac{1}{|B|}\int_{B}
\left|b_{i}(x)-b_{i, B}\right|^{q_{i}}dx\right)^{1/q_{i}}\omega(\vec{t})\,d\vec{t}\\
&\leq&\displaystyle
|B|^{\lambda}\int\limits_{0<t_{1},t_{2}<1}\prod_{i=1}^{2}t_i^{n\lambda_i}\prod_{i=1}^{2}\left(\frac{1}{|t_{i}B|^{1+\lambda_i p_i}}\int_{t_{i}B}
\left|f_{i}(x)\right|^{p_{i}}dx\right)^{1/p_{i}}\\
&&\quad\displaystyle\times\prod_{i=1}^{2}\left(\frac{1}{|B|}\int_{B}
\left|b_{i}(x)-b_{i, B}\right|^{q_{i}}dx\right)^{1/q_{i}}\omega(\vec{t})\,d\vec{t}\\
&\leq&\displaystyle
 C|B|^{\lambda}\|b_1\|_{\dot{CMO}^{q_1}}\|b_2\|_{\dot{CMO}^{q_2}}
 \|f_1\|_{\dot{B}^{p_1,\lambda_1}}\|f_2\|_{\dot{B}^{p_2,\lambda_2}}
 \int\limits_{0<t_{1},t_{2}<1}\prod_{i=1}^{2}t_i^{n\lambda_i}\omega(\vec{t})\,d\vec{t}.
\end{eqnarray*}

Similarly, we  obtain
\begin{eqnarray*}
\displaystyle I_{2}&\leq &\displaystyle
\int\limits_{0<t_{1},t_{2}<1}\prod_{i=1}^{2}\left(\frac{1}{|B|}\int_{B}
\left|f_{i}(t_{i}x)\right|^{p_{i}}dx\right)^{1/p_{i}}\prod_{i=1}^{2}\left(\frac{1}{|B|}\int_{B}
\left|b_{i}(t_{i}x)-b_{i, t_{i}B}\right|^{q_{i}}dx\right)^{1/q_{i}}\omega(\vec{t})\,d\vec{t}\\
&\leq&\displaystyle
|B|^{\lambda}\int\limits_{0<t_{1},t_{2}<1}\prod_{i=1}^{2}t_i^{n\lambda_i}\prod_{i=1}^{2}\left(\frac{1}{|t_{i}B|^{1+\lambda_i p_i}}\int_{t_{i}B}
\left|f_{i}(x)\right|^{p_{i}}dx\right)^{1/p_{i}}\\
&&\quad\displaystyle\times\prod_{i=1}^{2}\left(\frac{1}{|t_iB|}\int_{t_iB}
\left|b_{i}(x)-b_{i, t_iB}\right|^{q_{i}}dx\right)^{1/q_{i}}\omega(\vec{t})\,d\vec{t}\\
&\leq&\displaystyle
 C|B|^{\lambda}\|b_1\|_{\dot{CMO}^{q_1}}\|b_2\|_{\dot{CMO}^{q_2}}\|f_1\|_{\dot{B}^{p_1,\lambda_1}}
 \|f_2\|_{\dot{B}^{p_2,\lambda_2}}
\int\limits_{0<t_{1},t_{2}<1}\prod_{i=1}^{2}t_i^{n\lambda_i}\omega(\vec{t})\,d\vec{t}.
\end{eqnarray*}

It follows from $1/p=1/s_{1}+1/{s_2}$ that $1=p/s_{1}+p/{s_2}$.
From $1/s_1=1/p_1+1/q_1,1/s_2=1/p_2+1/q_2$ and H\"{o}lder's inequality, we deduce that
\begin{eqnarray*}
\displaystyle
I_3&=&\displaystyle \int\limits_{0<t_{1},t_{2}<1}\left(\frac{1}{|B|}\int_{B}
\left(\Big(\prod_{i=1}^{2}|f_{i}(t_{i}x)|\Big)\Big(\prod_{i=1}^{2}|b_{i,B}
-b_{i, t_iB}|\Big)\right)^p\,dx\right)^{1/p}\,\omega(\vec{t})\,d\vec{t}\\
&\leq&\displaystyle\int\limits_{0<t_{1},t_{2}<1}\prod_{i=1}^{2}\left(\frac{1}{|B|}\int_B|f_{i}(t_{i}x)|^{s_i}\right)^{1/s_{i}}\left(\prod_{i=1}^{2}|b_{i, B}-b_{i, t_{i}B}|\right)\omega(\vec{t})\,d\vec{t}\\
&\leq&\displaystyle C|B|^{\lambda}\int\limits_{0<t_{1},t_{2}<1}t_1^{n\lambda_1}t_2^{n\lambda_2}\prod_{i=1}^{2}\left(\frac{1}{|t_iB|^{1+\lambda_i p_i}}\int_{t_iB}|f_{i}(t_{i}x)|^{p_i}\right)^{1/p_{i}}\\
&&\quad\times\displaystyle\left(\prod_{i=1}^{2}|b_{i, B}-b_{i, t_{i}B}|\right)\omega(\vec{t})\,d\vec{t}\\
&\leq&\displaystyle C|B|^{\lambda}\|f_1\|_{\dot{B}^{p_1,\lambda_1}}\|f_2\|_{\dot{B}^{p_2,\lambda_2}}\int\limits_{0<t_{1},t_{2}<1}t_1^{n\lambda_1}t_2^{n\lambda_2}\prod_{i=1}^{2}|b_{i, B}-b_{i, t_{i}B}|\omega(\vec{t})\,d\vec{t}\\
&\leq&\displaystyle C|B|^{\lambda}\|f_1\|_{\dot{B}^{p_1,\lambda_1}}
\|f_2\|_{\dot{B}^{p_2,\lambda_2}}\sum_{\ell=0}^\infty\sum^{\infty}_{k=0}
\int\limits_{{2^{-\ell-1}}\leq t_1<{2^{-\ell}}}\int\limits_{{2^{-k-1}}\leq t_2<{2^{-k}}}t_1^{n\lambda_1}t_2^{n\lambda_2}\\
&&\quad\displaystyle\times\left(\sum^\ell_{j=0}\left|b_{1,2^{-j}B}-b_{1,2^{-j-1}B}\right|
+\left|b_{1,2^{-k-1}B}-b_{1,t_1B}\right|\right) \\
&&\quad\times \left(
\sum^k_{j=0}\left|b_{2,2^{-j}B}-b_{2,2^{-j-1}B}\right|
+\left|b_{2,2^{-k-1}B}-b_{2,t_2B}\right|\right)\omega(\vec{t})\,d\vec{t}\\
&\leq&\displaystyle
 C|B|^{\lambda}\|b_1\|_{\dot{CMO}^{q_1}}\|b_2\|_{\dot{CMO}^{q_2}}
 \|f_1\|_{\dot{B}^{p_1,\lambda_1}}\|f_2\|_{\dot{B}^{p_2,\lambda_2}}\\
&&\quad\displaystyle\times\int\limits_{0<t_{1},t_{2}<1}\prod_{i=1}^{2}
t_i^{n\lambda_i}\omega(\vec{t})\log\frac{2}{t_1}\log\frac{2}{t_1}\,d\vec{t},
\end{eqnarray*}
where we use the fact that
\begin{eqnarray*}
|b_{1,B}-b_{1,t_1B}|&\le&\sum^k_{j=0}\left|b_{1,2^{-j}B}-b_{1,2^{-j-1}B}\right|
+\left|b_{1,2^{-k-1}B}-b_{1,t_1B}\right|\\
&\leq& C(k+1)\|b\|_{\dot{CMO}^{q_1}}\leq C\log\frac{2}{t_1}\|b\|_{\dot{CMO}^{q_1}}
\end{eqnarray*}
and \begin{eqnarray*}
|b_{2,B}-b_{2,t_2B}|\leq C\log\frac{2}{t_2}\|b\|_{\dot{CMO}^{q_2}}
\end{eqnarray*}

We now estimate $I_{4}$. Similarly, we choose $1<s<\infty$ such that $1/p=1/p_1+1/p_2+1/s$ and $1/s=1/q_1+1/q_2$. Using Minkowski's inequality and H\"{o}lder's inequality yields

\begin{eqnarray*}
 I_4&=&
\int\limits_{0<t_{1},t_{2}<1}\left(\frac{1}{|B|}\int_{B}
\left(\Big(\prod_{i=1}^{2}|f_{i}(t_{i}x)|\Big)\Big(\sum_{D(i, j)}
|b_{i}(x)-b_{i, B}||b_{j, B}-b_{j, t_{j}B}|\Big)\right)^p\,dx\right)^{1/p}\,\omega(\vec{t})\,d\vec{t}\\
&\leq& \int\limits_{0<t_{1},t_{2}<1}\left[\left(\frac{1}{|B|}\int_{B}
\left(\bigg(\prod_{i=1}^{2}|f_{i}(t_{i}x)|\bigg)
\bigg(|b_{1}(x)-b_{1, B}||b_{2, B}-b_{2, t_{2}B}|\bigg)\right)^pdx\right)^{1/p}\right.
\\
&&\quad\left.
+\left(\frac{1}{|B|}\int_{B}
\left(\bigg(\prod_{i=1}^{2}|f_{i}(t_{i}x)|\bigg)\bigg(|b_{2}(x)-b_{2, B}||b_{1, B}-b_{1, t_{1}B}|\bigg)\right)^p\,dx\right)^{1/p}\right]\,\omega(\vec{t})\,d\vec{t}\\
&\leq&
\int\limits_{0<t_{1},t_{2}<1}\prod_{i=1}^{2}\left(\frac{1}{|B|}\int_{B}
\left|f_{i}(t_{i}x)\right|^{p_{i}}dx\right)^{1/p_{i}}\Bigg\{\left(\frac{1}{|B|}\int_{B}
\left|b_{1}(x)-b_{1, B}\right|^{s}dx\right)^{1/s}\\
&&\quad\times|b_{2, B}-b_{2, t_{2}B}|+
\left(\frac{1}{|B|}\int_{B}\left|b_{2}(x)-b_{2, B}\right|^{s}dx\right)^{1/s}|b_{1, B}-b_{1, t_{1}B}|\Bigg\}\omega(\vec{t})\,d\vec{t}\\
&\leq&\displaystyle
C|B|^{\lambda}\int\limits_{0<t_{1},t_{2}<1}t_1^{n\lambda_1}
t_2^{n\lambda_2}\prod_{i=1}^{2}\left(\frac{1}{|t_i B|^{1+\lambda_i p_i}}\int_{t_iB}
\left|f_{i}(x)\right|^{p_{i}}dx\right)^{1/p_{i}}\\
&&\quad\times\Bigg\{\left(\frac{1}{|B|}\int_{B}
\left|b_{1}(x)-b_{1, B}\right|^{s}dx\right)^{1/s}|b_{2, B}-b_{2, t_{2}B}|\\
&&\quad+\left(\frac{1}{|B|}\int_{B}\left|b_{2}(x)-b_{2, B}\right|^{s}dx\right)^{1/s}|b_{1, B}-b_{1, t_{1}B}|\Bigg\}\omega(\vec{t})\,d\vec{t}\\
&\leq&
C|B|^{\lambda}\|f_1\|_{\dot{B}^{q_1,\lambda_1}}\|f_2\|_{\dot{B}^{q_2},\lambda_2}\int\limits_{0<t_{1},t_{2}<1}t_1^{n\lambda_1}t_2^{n\lambda_2}\Bigg\{\left(\frac{1}{|B|}\int_{B}
\left|b_{1}(x)-b_{1, B}\right|^{s}dx\right)^{1/s}\\
&&\quad\times|b_{2, B}-b_{2, t_{2}B}|+\left(\frac{1}{|B|}\int_{B}\left|b_{2}(x)-b_{2, B}\right|^{s}dx\right)^{1/s}|b_{1, B}-b_{1, t_{1}B}|\Bigg\}\omega(\vec{t})\,d\vec{t}.
\end{eqnarray*}

From the estimates of $I_{1}$ and $I_{3}$, we deduce that
\begin{eqnarray*}I_{4}&\leq & C|B|^{\lambda}\|f_1\|_{\dot{B}^{q_1,\lambda_1}}\|f_2\|_{\dot{B}^{q_2},\lambda_2}\|b_1\|_{\dot{CMO}^{q_1}}\|b_2\|_{\dot{CMO}^{q_2}}\\
&&\quad\times\int_{0}^{1}\int_{0}^{1}
t_1^{n\lambda_1}t_2^{n\lambda_2}\omega(t_{1}, t_{2})\left(1+\sum_{i=1}^{2}\log\frac{1}{t_{i}}\right)\,d t_{1}dt_{2}.
\end{eqnarray*}
It can be deduced from the estimates of $I_{1}$, $I_{2}$, $I_{3}$ and $I_{4}$ that
$$I_{5}\leq C|B|^{\lambda}\|f_1\|_{\dot{B}^{q_1,\lambda_1}}\|f_2\|_{\dot{B}^{q_2},\lambda_2}\|b_1\|_{\dot{CMO}^{q_1}}\|b_2\|_{\dot{CMO}^{q_2}}\int\limits_{0<t_{1},t_{2}<1}t_1^{n\lambda_1}t_2^{n\lambda_2}\omega(\vec{t})\,d\vec{t}$$
and
\begin{eqnarray*}I_{6}&\leq & C|B|^{\lambda}\|f_1\|_{\dot{B}^{q_1,\lambda_1}}\|f_2\|_{\dot{B}^{q_2},\lambda_2}\|b_1\|_{\dot{CMO}^{q_1}}\|b_2\|_{\dot{CMO}^{q_2}}\\
&&\quad\times\int_{0}^{1}\int_{0}^{1}
t_1^{n\lambda_1}t_2^{n\lambda_2}\omega(t_{1}, t_{2})\left(1+\sum_{i=1}^{2}\log\frac{1}{t_{i}}\right)\,d t_{1}dt_{2}.
\end{eqnarray*}

Combining the estimates of $I_{1}$, $I_{2}$, $I_{3}$, $I_{4}$, $I_{5}$ and $I_{6}$ gives
\begin{eqnarray*}
\displaystyle \left(\frac{1}{|B|^{1+\lambda p}}\int_{B}|\mathcal{H}_{\omega}^{\vec{b}}\vec{f}(x)|^pdx\right)^{1/p}&\leq&
C|B|^{\lambda}\|f_1\|_{\dot{B}^{q_1,\lambda_1}}\|f_2\|_{\dot{B}^{q_2},\lambda_2}\|b_1\|_{\dot{CMO}^{q_1}}\|b_2\|_{\dot{CMO}^{q_2}}\\
&&\quad\displaystyle \times\int_{0}^{1}\int_{0}^{1}
t_1^{n\lambda_1}t_2^{n\lambda_2}\omega(t_{1}, t_{2})\prod_{i=1}^{2}\log\frac{2}{t_{i}}\,d t_{1}dt_{2}.
\end{eqnarray*}
This proves (i).

Now we  prove the necessity in (ii). Assume that
$$\|\mathcal{H}_{\omega}^{\vec{b}}\|_{
\dot{B}^{p_1,\lambda_1}(\mathbb{R}^{n})\times\dot{B}^{p_2,\lambda_2}(\mathbb{R}^{n})
\rightarrow \dot{B}^{p,\lambda}(\mathbb{R}^{n})}<\infty$$
whenever $\vec b=(b_1, b_2)\in\dot{\mathrm {CMO}}^{q_1}(\rn)\times\dot{\mathrm {CMO}}^{q_2}(\rn)$.
To show $\mathbb{C}_{2}<\infty$,
it suffices to prove that $\mathbb{A}_2<\infty$, $\mathbb{B}_{2}<\infty$,
$$\begin{array}{rl}
\mathbb{D}:=\displaystyle \int\limits_{0<t_{1},t_{2}<1}
\left(\prod_{i=1}^{2}t_{i}^{n\lambda_i}\right)\omega(t_1,t_2)\log
\frac{1}{t_1}\,dt_1\,dt_2<\infty,
\end{array}$$
and
$$\begin{array}{rl}
\mathbb{E}:=\displaystyle \int\limits_{0<t_{1},t_{2}<1}
\left(\prod_{i=1}^{2}t_{i}^{n\lambda_i}\right)\omega(t_1,t_2)\log
\frac{1}{t_2}\,dt_1\,dt_2<\infty.
\end{array}$$

To prove $\mathbb{B}_2<\infty$, we set $b_{1}(x):=\log|x|\in
\mathrm{BMO}(\mathbb{R}^{n})\subset\dot{\mathrm {CMO}}^{q_1}(\rn)$,
and $b_{2}(x):=\log|x|\in
\mathrm{BMO}(\mathbb{R}^{n})\subset\dot{\mathrm {CMO}}^{q_2}(\rn)$.
Define $f_{1}:=|x|^{n\lambda_1}$ and
$f_{2}:=|x|^{n\lambda_2}$ if $x\in\rn\setminus\{0\}$, and $f_1(0)=f-2(0):=0$.
Then
$$\|f_1\|_{\dot{B}^{p_1, \lambda_1}}=\left(\frac{\omega_n}{n}\right)^{-\lambda_1}\Big(\frac{1}{1+\lambda_1 p_1}\Big)^{1/p_1},\quad \|f_2\|_{\dot{B}^{p_2, \lambda_2}}=\left(\frac{\omega_n}{n}\right)^{-\lambda_2}\Big(\frac{1}{1+\lambda_2 p_2}\Big)^{1/p_2}$$
and
$$\mathcal{H}_\omega^{\vec{b}}(\vec{f})(x)=|x|^{n\lambda_1}|x|^{n\lambda_2}\int_0^1\int_0^1t_1^{n\lambda_1}t_2^{n\lambda_2}\omega(t_1,t_2)
\log\frac{1}{t_1}\log\frac{1}{t_2}dt_1dt_2.$$
Since  $1<p<p_i<\infty$, $-1/p_i<\lambda_i<0$ and $\lambda=\lambda_1+\lambda_2~(i=1, 2)$, we see that, for all $B=B(0,R)$,
\begin{eqnarray*}
 &&\Big(\frac{1}{|B|^{1+\lambda p}}\int_B |\mathcal{H}_\omega^{\vec{b}}(\vec{f})(x)|^{p}dx\Big)^{1/p}\\
&&\quad=\Big(\frac{1}{|B|^{1+\lambda p}}\int_B |x|^{n\lambda p}dx\Big)^{1/p}\int_0^1\int_0^1t_1^{n\lambda_1}t_2^{n\lambda_2}\omega(t_1,t_2)
\log\frac{1}{t_1}\log\frac{1}{t_2}dt_1dt_2\\
&&\quad =\left(\frac{\omega_n}{n}\right)^{-\lambda}\Big(\frac{1}{1+\lambda p}\Big)^{1/p}\int_0^1\int_0^1t_1^{n\lambda_1}t_2^{n\lambda_2}\omega(t_1,t_2)
\log\frac{1}{t_1}\log\frac{1}{t_2}dt_1dt_2\\
&&\quad= \|f_1\|_{\dot{B}^{p_1,\lambda_1}}\|f_2\|_{\dot{B}^{p_2,\lambda_2}}\int_0^1\int_0^1t_1^{n\lambda_1}t_2^{n\lambda_2}\omega(t_1,t_2)
\log\frac{1}{t_1}\log\frac{1}{t_2}dt_1dt_2.
\end{eqnarray*}
Thus
$\mathbb{B}_{2}\leq\|\mathcal{H}_{\omega}^{\vec{b}}\|_{
\dot{B}^{p_1,\lambda_1}(\mathbb{R}^{n})\times\dot{B}^{p_2,\lambda_2}(\mathbb{R}^{n})
\rightarrow \dot{B}^{p,\lambda}(\mathbb{R}^{n})}<\infty.$

Since the proof for $\mathbb{D}<\infty$ is similar to that for
$\mathbb{E}<\infty$, we only show
$\mathbb{E}<\infty$. To this end, for any $r\in\mathbb N$ and $R\in(0,+\infty)$, we choose
$b_{1}(x):=\chi_{[B(0,R/2)]^c}(x)\,\sin(\pi r|x|),$ and
$b_{2}(x):=\log|x|,
$
 where $[B(0,R/2)]^c:=\rn\setminus B(0, R/2)$. Obviously, we have $\vec{b}=(b_{1},b_2)\in
\mathrm{\dot{CMO}^{q_1}}(\mathbb{R}^{n})\times\mathrm{\dot{CMO}^{q_2}}(\mathbb{R}^{n}),$ and hence,
$$\|\mathcal{H}_{\omega}^{\vec{b}}\|_{
\dot{B}^{p_1,\lambda_1}(\mathbb{R}^{n})\times\dot{B}^{p_2,\lambda_2}(\mathbb{R}^{n})
\rightarrow \dot{B}^{p,\lambda}(\mathbb{R}^{n})}<\infty.$$

Let \begin{equation}\label{f1}
f_{1}(x):=
\begin{cases}
0,&\quad |x|\leq\frac{R}{2},\\
\displaystyle|x|^{n\lambda_1},&\quad
|x|>\frac{R}{2},\end{cases}
\end{equation}
and
\begin{equation}\label{f2}
f_{2}(x):=
\begin{cases}
0,&\quad |x|\leq\frac{R}{2},\\
\displaystyle|x|^{n\lambda_2},&\quad
|x|>\frac{R}{2}.
\end{cases}
\end{equation}
 Then,  we have
$$\begin{array}{rl}
\displaystyle &\mathcal{H}_{\omega}^{\vec{b}}
\vec{f}(x)\\
&\quad=\displaystyle\int\limits_{0<t_{1},t_{2}<1}
\left(\prod_{i=1}^{2}f_{i}(t_{i}x)\right)
\left(\prod_{i=1}^{2}(b_{i}(x)-b_{i}(t_{i}x))
\right)\omega(\vec{t})\,d\vec{t}\\
&\quad=\displaystyle
|x|^{n\lambda}
\int_{{\frac{R}{2|x|}}}^{1}\int_{{\frac{R}{2|x|}}}^{1}
t_{1}^{n\lambda_1}t_{2}^{n\lambda_2}\left(b_{1}(x)-b_{1}(t_{1}x)
\right)\omega(t_{1}, t_{2})\log\frac1{t_2}\,d t_{1} dt_{2}\\
&\quad =
\displaystyle
|x|^{n\lambda}b_{1}(x)
\int_{{\frac{R}{2|x|}}}^{1}\int_{{\frac{R}{2|x|}}}^{1}
t_{1}^{n\lambda_1}t_{2}^{n\lambda_2} \omega(t_{1},
t_{2})\log\frac1{t_2}\,d t_{1} dt_{2}-\eta_{d},
\end{array}$$
whenever $R/2<|x|<R$,
$$\begin{array}{rl}
\eta_{d}&=
|x|^{n\lambda}
\displaystyle\int_{{\frac{R}{2|x|}}}^{1}\int_{{\frac{R}{2|x|}}}^{1}
t_{1}^{n\lambda_1}t_{2}^{n\lambda_2}\omega(t_{1}, t_{2})b_{1}(t_1x)
\log\frac1{t_2}\,d t_{1} dt_{2}\\
&=\displaystyle
|x|^{n\lambda}
\int_{{\frac{R}{2|x|}}}^{1}\int_{{\frac{R}{2|x|}}}^{1}
t_{1}^{n\lambda_1}t_{2}^{n\lambda_2}\omega(t_{1}, t_{2})\sin(\pi
rt_1|x|) \log\frac1{t_2}\,d t_{1}\,dt_{2}.
\end{array}$$

Since $\omega$ is integrable on $[0,1]\times[0,1]$ and
$\mathbb{B}_2<\infty$, we know that
$$\displaystyle t_{1}^{n\lambda_1}t_{2}^{n\lambda_2}\omega(t_{1},t_{2})\log\frac1{t_2}$$
is integrable on
$(\frac{1}{2},1)\times(\frac{1}{2},1)$. Then, it follows from Lemma \ref{LA} that for any $\delta>0$, there exists a positive constant
$C_{R,\delta}$ that depends on $R$ and $\delta$ such that
$$\begin{array}{rl}
\displaystyle\int_{{\frac{1}{2}}}^{1}\int_{{\frac{1}{2}}}^{1}
t_{1}^{n\lambda_1}t_{2}^{n\lambda_2}\omega(t_{1}, t_{2})\,\sin(\pi rt_1)
\log\frac1{t_2}\,d t_{1} dt_{2}<\delta/2,
\end{array}$$
for all
$r>C_{R,\delta}$. Now we choose $r>\max(1/R,1)C_{R,\delta}$.
Then for any $R/2<|x|<R$,
$r|x|>C_{R,\delta}$, and hence
$$\begin{array}{rl}
\displaystyle\int_{{\frac{1}{2}}}^{1}\int_{{\frac{1}{2}}}^{1}
t_{1}^{n\lambda_1}t_{2}^{n\lambda_2}\omega(t_{1}, t_{2})\,\sin(\pi
rt_1|x|) \log\frac1{t_2}\,d t_{1} dt_{2}<\delta/2,
\end{array}$$
which further implies that $\eta_{d}<\frac{\delta}2
|x|^{n\lambda}.$
Therefore, for any $R/2<|x|<R$,
$$\begin{array}{rl}
|\mathcal{H}_{\omega}^{\vec{b}}\vec{f}(x)|&\geq \displaystyle
|x|^{n\lambda}
\left(\int_{{\frac{R}{2|x|}}}^{1}\int_{{\frac{R}{2|x|}}}^{1}
t_{1}^{n\lambda_1}t_{2}^{n\lambda_2} \omega(t_{1},
t_{2})\log\frac1{t_2}\,d t_{1} dt_{2}-\frac{\delta}2\right).
\end{array}$$

Let $\varepsilon>0$ be small enough and choose $\delta>0$ such that
$$\begin{array}{rl}
\delta<\displaystyle \int_{{\frac{R\varepsilon}{2}}}^{1}
\int_{{\frac{R\varepsilon}{2}}}^{1}
t_{1}^{n\lambda_1}t_{2}^{n\lambda_2} \omega(t_{1}
t_{2})\log\frac1{t_2}\,d t_{1} dt_{2}.
\end{array}$$
Then, for all balls $B=B(0,R)$,
$$\begin{array}{rl}
&\displaystyle\left(\frac{1}{|B|^{1+\lambda p}}\int_B|\mathcal{H}_{\omega}^{\vec{b}}\vec{f}(x)|^pdx\right)^{1/p}\\
&\quad\geq \displaystyle \left(\frac{1}{|B|^{1+\lambda p}}\int\limits_{R/2<|x|<R}
|x|^{n\lambda p}\left(\int_{{\frac{R}{2|x|}}}^{1}
\int_{{\frac{R}{2|x|}}}^{1}
t_{1}^{n\lambda_1}t_{2}^{n\lambda_2}
\omega(t_{1}, t_{2})\log\frac1{t_2}\,d t_{1} dt_{2}
-\frac{\delta}2\right)^{p}\,dx\right)^{1/p}\\
&\quad\geq \displaystyle \left(\frac{1}{|B|^{1+\lambda p}}\int\limits_{R/2<|x|<R}
|x|^{n\lambda p}\left(\int_{{\frac{R\varepsilon}{2}}}^{1}
\int_{{\frac{R\varepsilon}{2}}}^{1}
t_{1}^{n\lambda_1}t_{2}^{n\lambda_2}\omega(t_{1},
t_{2})\log\frac1{t_2}\,d t_{1} dt_{2}-\frac{\delta}2\right)^{p}\,dx\right)^{1/p}\\
&\quad\geq \displaystyle C \left(\frac{1}{|B|^{1+\lambda p}}\int\limits_{R/2<|x|<R}
|x|^{n\lambda p}\left(\int_{{\frac{R\varepsilon}{2}}}^{1}
\int_{{\frac{R\varepsilon}{2}}}^{1}
t_{1}^{n\lambda_1}t_{2}^{n\lambda_2} \omega(t_{1},
t_{2})\log\frac1{t_2}\,d t_{1} dt_{2}\right)^{p}\,dx\right)^{1/p}\\
&\quad\geq\displaystyle C\left(\frac{\omega_n}{n}\right)^{-\lambda}\Big(\frac{1}{1+\lambda p}\Big)^{1/p}\int_{{\frac{R\varepsilon}{2}}}^{1}
\int_{{\frac{R\varepsilon}{2}}}^{1}
t_{1}^{n\lambda_1}t_{2}^{n\lambda_2}\omega(t_{1},
t_{2})\log\frac1{t_2}\,d t_{1} dt_{2}\\
&\quad=\displaystyle C
\prod_{i=1}^{2}\|f_{i}\|_{\dot{B}^{p_i,\lambda_i}(\mathbb{R}^{n})}
\int_{{\frac{R\varepsilon}{2}}}^{1}
\int_{{\frac{R\varepsilon}{2}}}^{1}
t_{1}^{n\lambda_1}t_{2}^{n\lambda_2}\omega(t_{1},
t_{2}) \log\frac1{t_2}\,d t_{1} dt_{2},
\end{array}$$
which further implies that
\begin{eqnarray*}
&&\|\mathcal{H}_{\omega}^{\vec{b}}\|_{\dot{B}^{p_1,\lambda_1}(\mathbb{R}^{n})\times
\dot{B}^{p_2,\lambda_2}(\mathbb{R}^{n}) \rightarrow
\dot{B}^{p,\lambda}(\mathbb{R}^{n})}\\
&&\quad\geq \displaystyle C
\prod_{i=1}^{2}\|f_{i}\|_{\dot{B}^{p_i,\lambda_i}(\mathbb{R}^{n})}
\int_{{\frac{R\varepsilon}{2}}}^{1}
\int_{{\frac{R\varepsilon}{2}}}^{1}
t_{1}^{n\lambda_1}t_{2}^{n\lambda_2}\omega(t_{1},
t_{2}) \log\frac1{t_2}\,d t_{1} dt_{2}.
\end{eqnarray*}
Letting $\varepsilon\to 0^+$   concludes
$\mathbb{E}<\infty.$

To show that $\mathbb{A}_2<\infty$, we let
$$b_{1}(x)=b_{2}(x):=\chi_{[B(0,R/2)]^c}(x)\,
\sin(\pi r|x|),$$
where  $ R\in(0,+\infty)$ and $r\in \mathbb{N}$, and let $f_1,\ f_2$ be as in (\ref{f1}), (\ref{f2}), respectively.
Repeating the proof for
$\mathbb{E}<\infty$, we also obtain that $\mathbb{A}_2<\infty$.
Combining all above estimates then yields $\mathbb{C}_2<\infty.$
This finishes the proof of the Theorem \ref{t3}.
\end{proof}

We remark that Theorem \ref{t3} when $m=1$ is just \cite[Theorem 3.1]{FZW}.

In particular, when $n=1$
and $$\omega(\vec{t}):=\frac{1}{\Gamma(\alpha)|(1-t_{1}, \dots, 1-t_{m})|^{m-\alpha}},$$
we know that
$$\mathcal{H}_{\omega}^{\vec{b}}(\vec{f})(x)=x^{-\alpha}I^{m}_{\alpha, \vec{b}}\vec{f}(x),\,\quad x>0,$$
where
$$I^{m}_{\alpha, \vec{b}}\vec{f}(x):=\frac{1}{\Gamma(\alpha)}
\int\limits_{0<t_{1},t_{2},...,t_{m}<x}
\frac{\left(\prod_{i=1}^{m}f_{i}(t_{i})\right)\prod_{i=1}^{m}(b_{i}(x)-b_{i}(t_{i}x))}{|(x-t_{1}, \dots, x-t_{m})|^{m-\alpha}}d\vec{t}.$$
Then, as an immediate consequence of  Theorem \ref{t3},
we have the following corollary.

\begin{corollary}
Let $0<\alpha<m$. Under the assumptions of Theorem \ref{t3}, the operator
$I^{m}_{\alpha, \vec{b}}$ maps the product of central Morrey spaces
$\dot{B}^{p_1,\lambda_1}(\mathbb{R})\times \dots \times \dot{B}^{p_m,\lambda_m}(\mathbb{R})$
to $ \dot{B}^{p,\lambda}(x^{-p\alpha}dx)$.
\end{corollary}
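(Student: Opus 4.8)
The plan is to deduce the corollary directly from Theorem \ref{t3} by exploiting the pointwise identity
$$\mathcal{H}_{\omega}^{\vec{b}}(\vec{f})(x)=x^{-\alpha}I^{m}_{\alpha,\vec{b}}\vec{f}(x),\qquad x>0,$$
recorded just before the statement for the chosen weight. First I would write the weighted central Morrey norm explicitly as
$$\|g\|_{\dot{B}^{p,\lambda}(x^{-p\alpha}dx)}=\sup_{R>0}\left(\frac{1}{|B(0,R)|^{1+\lambda p}}\int_{B(0,R)}|g(x)|^{p}\,x^{-p\alpha}\,dx\right)^{1/p}.$$
Since $I^{m}_{\alpha,\vec{b}}\vec{f}(x)=x^{\alpha}\mathcal{H}_{\omega}^{\vec{b}}(\vec{f})(x)$, the factor $x^{-p\alpha}$ in the weight exactly cancels the factor $x^{p\alpha}$ produced by $I^{m}_{\alpha,\vec{b}}$, so that $|I^{m}_{\alpha,\vec{b}}\vec{f}(x)|^{p}\,x^{-p\alpha}=|\mathcal{H}_{\omega}^{\vec{b}}(\vec{f})(x)|^{p}$ pointwise. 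Consequently $\|I^{m}_{\alpha,\vec{b}}\vec{f}\|_{\dot{B}^{p,\lambda}(x^{-p\alpha}dx)}=\|\mathcal{H}_{\omega}^{\vec{b}}(\vec{f})\|_{\dot{B}^{p,\lambda}}$, and boundedness of $I^{m}_{\alpha,\vec{b}}$ into the weighted space is equivalent to boundedness of $\mathcal{H}_{\omega}^{\vec{b}}$ into $\dot{B}^{p,\lambda}(\rr)$.

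By Theorem \ref{t3}(i), the latter holds for all $\vec{b}\in\dot{CMO}^{q_1}(\rr)\times\cdots\times\dot{CMO}^{q_m}(\rr)$ as soon as $\mathbb{C}_{m}<\infty$. Thus the entire corollary reduces to verifying the single scalar integrability condition for this particular weight; taking $n=1$ this reads
$$\mathbb{C}_{m}=\frac{1}{\Gamma(\alpha)}\int\limits_{0<t_{1},\dots,t_{m}<1}\frac{\prod_{i=1}^{m}t_{i}^{\lambda_i}\log\frac{2}{t_{i}}}{|(1-t_{1},\dots,1-t_{m})|^{m-\alpha}}\,d\vec{t}<\infty.$$

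To check finiteness I would split the cube $(0,1)^{m}$ according to where the integrand might blow up. Near each face $t_{i}=0$ the kernel $|(1-t_{1},\dots,1-t_{m})|^{-(m-\alpha)}$ remains bounded, while each factor $t_{i}^{\lambda_i}\log\frac{2}{t_{i}}$ is integrable on $(0,1)$ because $\lambda_i>-1/p_i>-1$; hence these regions contribute a finite amount. The only genuine singularity is at the corner $(t_{1},\dots,t_{m})=(1,\dots,1)$: after the substitution $s_{i}=1-t_{i}$ the kernel becomes $|(s_{1},\dots,s_{m})|^{-(m-\alpha)}$ while the remaining factors stay bounded. The main (and essentially only nontrivial) point is that in $\mathbb{R}^{m}$ the function $|s|^{-(m-\alpha)}$ is locally integrable precisely because $m-\alpha<m$, which is exactly the hypothesis $0<\alpha<m$; passing to polar coordinates converts the local integral into a radial one with integrand $\rho^{m-1-(m-\alpha)}=\rho^{\alpha-1}$, which is integrable near $\rho=0$. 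Combining the two regimes yields $\mathbb{C}_{m}<\infty$, and the corollary follows from Theorem \ref{t3}(i). The main obstacle is exactly this corner integrability estimate near $(1,\dots,1)$; everything else is the bookkeeping of the norm identity.
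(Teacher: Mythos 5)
Your proposal is correct and follows exactly the route the paper intends: the paper states this corollary as an immediate consequence of Theorem \ref{t3} via the pointwise identity $\mathcal{H}_{\omega}^{\vec{b}}(\vec{f})(x)=x^{-\alpha}I^{m}_{\alpha,\vec{b}}\vec{f}(x)$ and gives no further argument, so the only substantive content is the verification of $\mathbb{C}_{m}<\infty$ for the kernel $\Gamma(\alpha)^{-1}|(1-t_{1},\dots,1-t_{m})|^{-(m-\alpha)}$. Your split into the faces $t_{i}=0$ (where $\lambda_{i}>-1/p_{i}>-1$ makes $t_{i}^{\lambda_i}\log(2/t_{i})$ integrable and the kernel is bounded) and the corner $(1,\dots,1)$ (where polar coordinates give $\int\rho^{\alpha-1}\,d\rho<\infty$ since $\alpha>0$) is precisely the detail the paper leaves implicit, and it is carried out correctly.
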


\section{Weighted Ces\`{a}ro operator of multilinear type and its commutator}
In this section, we focus on the corresponding results for the adjoint operators of weighted multilinear
Hardy operators.

Recall that, as the adjoint operator of the weighted
Hardy operator, the \emph{weighted Ces\`{a}ro operator $G_{\omega}$} is defined by
$$G_{\omega}f(x):=\int^1_{0}f(x/t)t^{-n}\omega(t)\,dt,\hspace{3mm}x\in \mathbb{ R}^n.$$
In particular, when $\omega\equiv 1$ and $n=1$, $G_{\omega}$ is the classical
Ces\`{a}ro operator defined as
$$\displaylines{
Gf(x):=\left\{\begin{array}{ll}
\displaystyle\int^\infty_{x}\frac{f(y)}{y}\,dy,&\quad x>0,\\
\displaystyle-\int^x_{-\infty}\frac{f(y)}{y}\,dy,&\quad
x<0.\end{array}\right.}$$
When $n=1$ and $\omega(t):=\frac{1}{\Gamma(\alpha)(\frac{1}{t}-1)^{1-\alpha}}$ with $0<\alpha<1$, the operator
$G_{\omega}f(\cdot)$ is reduced to $(\cdot)^{1-\alpha}J_{\alpha}f(\cdot)$, where $J_{\alpha}$ is a variant of Weyl integral operator defined by
$$J_{\alpha}f(x):=\frac{1}{\Gamma(\alpha)}\int_{x}^{\infty}
\frac{f(t)}{(t-x)^{1-\alpha}}\frac{dt}{t}, \quad x>0$$
Moreover, it is well known that the weighted Hardy operator $H_{\omega}$
and the weighted Ces\`{a}ro operator $G_{\omega}$ are adjoint
mutually, namely,
$$\int_{\mathbb{R}^{n}}g(x)H_{\omega}f(x)\,dx=\int_{\mathbb{R}^{n}}
f(x)G_{\omega}g(x)\,dx,\eqno(4.1)$$
for all $f\in L^p(\mathbb{R}^n)$ and $g\in L^q(\mathbb{R}^n)$ with
$1<p<\infty, 1/p+1/q=1$. We refer to \cite{X,FZW} for more details.

Let the integer $m\geq 2$, and $\omega :
[0,1]\times[0,1]^m\rightarrow [0,\infty)$ be an integrable function. Let $f_{i}$ be  measurable
complex-valued functions  on $\mathbb{R}^n$, $1\leq i\leq m$. Corresponding to the weighted multilinear Hardy operators,  we define the following \emph{weighted multilinear Ces\`{a}ro operator}:
$$\mathcal{G}_{\omega}(\vec{f})(x):=
\int\limits_{0<t_{1},t_{2},\ldots,t_{m}<1}
\left(\prod_{i=1}^{m}f_{i}(x/t_{i})(t_{i})^{-n}\right)\omega(\vec{t})\,d\vec{t},\quad x\in \mathbb{ R}^n.$$
Notice that in general
$\mathcal{H}_{\omega}$ and $\mathcal{G}_{\omega}$ do not obey the commutative rule (4.1).

We also point out that, when $n=1$ and $$\omega(\vec{t}):=\frac{1}{\Gamma(\alpha)|(\frac{1}{t_{1}}-1, \dots, \frac{1}{t_{m}}-1)|^{m-\alpha}},$$ the operator
$$\mathcal{G}_{\omega}(\vec{f})(x)=x^{m-\alpha}J^{m}_{\alpha}\vec{f}(x),\,\quad x>0,$$
where
$$J^{m}_{\alpha}\vec{f}(x):=\frac{1}{\Gamma(\alpha)}
\int\limits_{x<t_{1},t_{2},...,t_{m}<\infty}\frac{\prod_{i=1}^{m}f_{i}(x_{i})}{|(t_{1}-x, \dots, t_{m}-x)|^{m-\alpha}}\frac{d\vec{t}}{\vec{t}}.$$

Similar to the argument used in Section 2, we have the following conclusions.

\begin{theorem}\label{t4}
If $f_i \in L^{p_i}(\rn)$,  $1<p, p_i<\infty$, $i=1,\ldots, m$, and $1/p=1/p_1+\cdots+1/p_m$, then $\mathcal{G}_{\omega}$ is bounded from
$L^{p_1}(\mathbb{R}^{n})\times \dots \times L^{p_m}(\mathbb{R}^{n})$ to
$ L^p(\mathbb{R}^{n})$ if and only
if
$$\mathbb{F}:=\int\limits_{0<t_{1},t_{2},...,t_{m}<1}
\left(\prod_{i=1}^{m}t_{i}^{-n(1-1/p_{i})}\right)\omega(\vec{t})\,d\vec{t}<\infty.\eqno(4.2)$$
Moreover, $$\|\mathcal{G}_{\omega}\|_{L^{p_1}(\mathbb{R}^n)\times
\dots \times L^{p_m}(\mathbb{R}^n)\rightarrow L^{p}(\mathbb{R}^n)}=\mathbb{F}.\eqno(4.3)$$
\end{theorem}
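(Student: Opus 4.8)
The plan is to prove Theorem~\ref{t4} by mirroring the structure of the proof of Theorem~\ref{t1}, with the essential modification that the substitutions $t_i x \mapsto x/t_i$ change the relevant scaling exponent from $-n/p_i$ to $-n(1-1/p_i)$. As before, I would reduce to the case $m=2$, since the general argument is identical apart from notational bookkeeping. For the sufficiency direction, assuming \eqref{4.3}'s condition $\mathbb{F}<\infty$, I would first apply Minkowski's integral inequality to pull the $L^p$ norm inside the integral over $(t_1,\dots,t_m)$, writing
\[
\|\mathcal{G}_\omega(\vec f)\|_{L^p}
\le \int\limits_{0<t_1,t_2<1}
\left(\int_{\rn}\bigl|f_1(x/t_1)f_2(x/t_2)\bigr|^p\,dx\right)^{1/p}
\prod_{i=1}^2 t_i^{-n}\,\omega(\vec t)\,d\vec t.
\]
Then H\"older's inequality with $1/p=1/p_1+1/p_2$ separates the product, and the key computation is the change of variables $y = x/t_i$, which gives $dx = t_i^n\,dy$, so that $\bigl(\int |f_i(x/t_i)|^{p_i}\,dx\bigr)^{1/p_i} = t_i^{n/p_i}\|f_i\|_{L^{p_i}}$. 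Combining this dilation factor $t_i^{n/p_i}$ with the weight $t_i^{-n}$ from the definition of $\mathcal{G}_\omega$ yields exactly the exponent $t_i^{-n(1-1/p_i)}$ appearing in $\mathbb{F}$, so the upper bound $\|\mathcal{G}_\omega\|\le\mathbb{F}$ follows.

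For the necessity and the sharpness of the constant, I would construct extremizing test functions adapted to the Ces\`aro scaling. In Theorem~\ref{t1} the functions $f_i^\varepsilon$ behaved like $|x|^{-n/p_i - (\text{small})}$ on $|x|$ large; here, because the operator evaluates $f_i$ at $x/t_i$ with $t_i\in(0,1)$ so that $|x/t_i|>|x|$, the natural choice is functions supported on a ball (or with the reciprocal power law) so that $f_i(x/t_i)$ is controlled. Concretely I expect to take $f_i^\varepsilon(x)$ to be $|x|^{-n(1-1/p_i)+(\text{small }\varepsilon\text{ perturbation})}$ on a suitable region and zero elsewhere, chosen so that the region $E_x$ over which the integrand is nonzero shrinks to the full cube $(0,1)^2$ as $\varepsilon\to0^+$ and $|x|\to\infty$. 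Computing $\|f_i^\varepsilon\|_{L^{p_i}}$ and the quantity $\|\mathcal{G}_\omega(\vec f^\varepsilon)\|_{L^p}$, restricting the outer $x$-integral to a far-out region, and then letting $\varepsilon\to0^+$ (using that the spurious prefactor tends to $1$) should give the reverse bound $\|\mathcal{G}_\omega\|\ge\mathbb{F}$, whence equality \eqref{4.3} and, in particular, that boundedness forces $\mathbb{F}<\infty$.

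The main obstacle I anticipate is getting the test functions and their supports exactly right so that the monotone convergence argument delivers the full constant $\mathbb{F}$ in the limit. Unlike the Hardy case, where dilation by $t_i\in(0,1)$ contracts $|x|$, here the Ces\`aro operator dilates by $1/t_i>1$, so one must be careful about where the integrand is supported and ensure the domain of integration in the $\vec t$ variables exhausts $(0,1)^m$ in the limit rather than an improper subregion. Once the correct extremizers are identified, the remaining computations are routine applications of Fubini's theorem, the change of variables $y=x/t_i$, and the dominated/monotone convergence theorems, exactly paralleling the bilinear computation already carried out for $\mathcal{H}_\omega^2$. I would therefore present the sufficiency in full and then indicate the necessity by exhibiting the extremizers and the limiting computation, remarking that the general $m$ case follows verbatim.
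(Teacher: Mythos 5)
Your sufficiency argument is correct and is exactly what the paper intends: the paper gives no proof of Theorem~\ref{t4} at all, saying only that it follows ``similar to the argument used in Section 2,'' and your Minkowski--H\"older--change-of-variables computation, with $\bigl(\int_{\rn}|f_i(x/t_i)|^{p_i}\,dx\bigr)^{1/p_i}=t_i^{n/p_i}\|f_i\|_{L^{p_i}}$ combining with the factor $t_i^{-n}$ to produce $t_i^{-n(1-1/p_i)}$, is the right adaptation of the proof of Theorem~\ref{t1}.

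There is, however, a concrete error in your proposed extremizers for the necessity/sharpness direction. You suggest taking $f_i^{\varepsilon}(x)\sim|x|^{-n(1-1/p_i)+\varepsilon}$, but $-n(1-1/p_i)$ is the exponent that belongs in the weight integral $\mathbb{F}$, not in the test functions. Indeed, for a function supported in the unit ball one needs $\alpha p_i>-n$ for membership in $L^{p_i}$, and $\alpha=-n(1-1/p_i)+\varepsilon$ violates this for every $p_i>2$ and small $\varepsilon$; for a function supported outside the unit ball the resulting scaling produces $t_i^{-n/p_i-\varepsilon}$ rather than the exponent in $\mathbb{F}$. The correct choice, dual to (2.3)--(2.4) of the paper, is the near-critical $L^{p_i}$ singularity at the origin: take $f_i^{\varepsilon}(x):=|x|^{-n/p_i+\varepsilon_i}$ for $|x|<1$ and $f_i^{\varepsilon}(x):=0$ for $|x|\geq 1$ (with the $\varepsilon_i$ linked as in the proof of Theorem~\ref{t1} so that the norms match up). Then $f_i^{\varepsilon}(x/t_i)=t_i^{\,n/p_i-\varepsilon_i}|x|^{-n/p_i+\varepsilon_i}$ precisely on the set $|x|<t_i$, and together with the factor $t_i^{-n}$ this yields $t_i^{-n(1-1/p_i)-\varepsilon_i}$; restricting the $x$-integration to $|x|$ small makes the $\vec t$-region $\{t_i>|x|\}$ exhaust $(0,1)^m$, and letting $\varepsilon\to 0^{+}$ recovers $\mathbb{F}$. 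You correctly flagged this as the delicate point, but as written the construction would not go through; with the exponent corrected the rest of your outline is sound.
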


We can also deduce from Theorem \ref{t4} that

\begin{corollary}
 Let $0<\alpha<m$. Under the assumptions of Theorem 4.1, we have $J^{m}_{\alpha}$ maps the product of weighted Lebesgue spaces
$L^{p_1}(\mathbb{R})\times \dots \times L^{p_m}(\mathbb{R})$
to $ L^p(x^{pm-p\alpha} dx)$
with norm
$$\frac{1}{\Gamma(\alpha)}\int\limits_{0<t_{1},t_{2},...,t_{m}<1}\left(\prod_{i=1}^{m}
t_{i}^{-(1-1/p_{i})}\right)\frac{1}{|(\frac{1}{t_{1}}-1, \dots, \frac{1}{t_{m}}-1)|^{m-\alpha}}\,d\vec{t}.$$
\end{corollary}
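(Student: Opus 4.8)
The plan is to specialize Theorem \ref{t4} to the one-dimensional setting with the explicit kernel $\omega(\vec t)=\Gamma(\alpha)^{-1}|(\tfrac1{t_1}-1,\dots,\tfrac1{t_m}-1)|^{-(m-\alpha)}$ and then transport the resulting sharp bound for $\mathcal{G}_\omega$ to $J^m_\alpha$ through the pointwise identity recorded just above the corollary. Since $n=1$, the exponent $-n(1-1/p_i)$ in the definition of $\mathbb{F}$ collapses to $-(1-1/p_i)$, so the quantity $\mathbb{F}$ furnished by Theorem \ref{t4} is literally the integral asserted as the operator norm; this first matching is pure bookkeeping.

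Before invoking Theorem \ref{t4} I would check that $\mathbb{F}<\infty$ whenever $0<\alpha<m$, which is what makes the operator bounded in the first place. The only singularity of the kernel sits at $(t_1,\dots,t_m)\to(1,\dots,1)$, where $|(\tfrac1{t_1}-1,\dots,\tfrac1{t_m}-1)|\to0$; after the substitution $u_i=\tfrac1{t_i}-1$ this is an integrable singularity of order $m-\alpha<m$ in $\mathbb{R}^m$, while the factor $\prod_i t_i^{-(1-1/p_i)}$ remains bounded there. Near the faces $t_i\to0$ the kernel decays like $t_i^{m-\alpha}$ and absorbs the mild growth $t_i^{-(1-1/p_i)}$, so convergence is routine; I would leave these estimates to a brief computation.

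The core of the argument is the change of variables $t_i\mapsto s_i=x/t_i$ (with $x>0$ fixed) in the defining integral of $J^m_\alpha\vec f(x)$: it sends the region $x<t_i<\infty$ to $0<s_i<1$, turns $\frac{d\vec t}{\vec t}$ into $\prod_i s_i^{-1}\,ds_i$, and produces the homogeneity factor $|(t_1-x,\dots,t_m-x)|^{-(m-\alpha)}=x^{-(m-\alpha)}|(\tfrac1{s_1}-1,\dots,\tfrac1{s_m}-1)|^{-(m-\alpha)}$. This reproduces exactly $\mathcal{G}_\omega(\vec f)(x)=x^{m-\alpha}J^m_\alpha\vec f(x)$ for $x>0$, as announced in the text. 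Hence for $x>0$ one has $|\mathcal{G}_\omega(\vec f)(x)|^p=x^{p(m-\alpha)}|J^m_\alpha\vec f(x)|^p$, and integrating the $p$-th power gives the norm identity $\|\mathcal{G}_\omega(\vec f)\|_{L^p(0,\infty)}=\|J^m_\alpha\vec f\|_{L^p(x^{pm-p\alpha}\,dx)}$.

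The step demanding the most care is reconciling the full-line norm of Theorem \ref{t4} with the half-line weighted norm of the corollary. Since $x^{pm-p\alpha}$ is real only for $x>0$, the target space $L^p(x^{pm-p\alpha}\,dx)$ lives intrinsically on $(0,\infty)$, and $J^m_\alpha\vec f(x)$ depends only on the restrictions of the $f_i$ to $(0,\infty)$. Because $\mathcal{G}_\omega$ does not couple the two half-lines---at $x>0$ it samples the $f_i$ only on $(x,\infty)$---the supremum defining $\|\mathcal{G}_\omega\|$ is attained by functions supported on $(0,\infty)$, where the $L^p(\rr)$ norm coincides with the $L^p(0,\infty)$ norm. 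Restricting to such $\vec f$ and combining the norm identity above with $\|\mathcal{G}_\omega\|=\mathbb{F}$ then delivers the claimed value of $\|J^m_\alpha\|$. I expect this half-line bookkeeping---in particular verifying that the extremizers used in the proof of Theorem \ref{t4} may be taken supported on $(0,\infty)$, so that restriction does not shrink the sharp constant $\mathbb{F}$---to be the genuine obstacle.
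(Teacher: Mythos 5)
Your proposal is correct and follows exactly the route the paper intends: the corollary is stated there without proof as an immediate specialization of Theorem 4.1 to the kernel $\omega(\vec t)=\Gamma(\alpha)^{-1}\big|\big(\tfrac{1}{t_1}-1,\dots,\tfrac{1}{t_m}-1\big)\big|^{-(m-\alpha)}$ combined with the pointwise identity $\mathcal{G}_{\omega}(\vec f)(x)=x^{m-\alpha}J^{m}_{\alpha}\vec f(x)$ for $x>0$, which is precisely your change of variables $s_i=x/t_i$. The half-line issue you flag as the genuine obstacle (and which the paper silently ignores) closes routinely: the near-extremizers used for the sharpness of Theorem 2.1/4.1 are even functions of $x$, and since $1/p=\sum_i 1/p_i$ the resulting factors of $2$ in numerator and denominator cancel, so restricting them to $(0,\infty)$ leaves the ratio unchanged and the sharp constant $\mathbb{F}$ is not shrunk by passing to functions supported on the half-line.
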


Next, we define the commutator of weighted Ces\`{a}ro operators of multilinear type as
$$\mathcal{G}_{\omega}^{\vec{b}}(\vec{f})(x):=
\int\limits_{0<t_{1},t_{2},...,t_{m}<1}\left(\prod_{i=1}^{m}f_{i}(x/t_{i})(t_{i})^{-n}\right)
\left(\prod_{i=1}^{m}\left(b_{i}(x)-b_{i}(\frac{x}{t_{i}})\right)
\right)\omega(\vec{t})\,d\vec{t},\,\quad x\in \mathbb{ R}^n.$$
In particular, we know that
$$\mathcal{G}_{\omega}^{\vec{b}}(\vec{f})(x)=x^{m-\alpha}J^{m}_{\alpha, \vec{b}}\vec{f}(x),\,\, x>0,$$
where
$$J^{m}_{\alpha, \vec{b}}\vec{f}(x):=\frac{1}{\Gamma(\alpha)}\int\limits_{x<t_{1},t_{2},...,t_{m}<\infty}
\frac{\left(\prod_{i=1}^{m}f_{i}(x_{i})\right)\prod_{i=1}^{m}
(b_{i}(x)-b_{i}(x/t_{i}))}{|(t_{1}-x, \dots, t_{m}-x)|^{m-\alpha}}\frac{d\vec{t}}{\vec{t}}.$$

 Let $m \in \mathbb{N}$ and $m\geq 2$. Define
$$\mathbb{F}_{m}:=\int\limits_{0<t_{1},t_{2}<,...,<t_{m}<1}\left(\prod_{i=1}^{m}t_{i}^{-n\lambda_i-n}\right)\omega(\vec{t})\prod_{i=1}^{m}\log\frac{2}{t_{i}}\,d\vec{t}.$$

Similar to the arguments in Section 3, we have the following conclusion.

\begin{theorem} \label{t5}
If $f_i \in L^{p_i}(\rn)$,  $1<p< p_i<\infty, 1<q_i<\infty$, $-1/p_i<\lambda_i<0$, $i=1,\ldots, m$, and $\frac{1}{p}=\frac{1}{p_1}+ \cdots
+\frac{1}{p_m}+\frac{1}{q_1}+ \cdots
+\frac{1}{q_m}$, $\lambda=\lambda_1+\cdots+\lambda_m$.

$\rm(i)$ If $\mathbb{F}_{m}<\infty$, then
$\mathcal{G}_{\omega}^{\vec{b}} $ is bounded from
$\dot{B}^{p_1, \lambda_1}(\mathbb{R}^{n})\times \cdots \times \dot{B}^{p_m, \lambda_m}(\mathbb{R}^{n})$ to $ \dot{B}^{p, \lambda}(\mathbb{R}^{n})$,
for all $\vec{b}=(b_1,b_2,\ldots,b_m)\in \dot{\mathrm{CMO}}^{q_1}(\mathbb{R}^{n})\times\cdots
\times\dot{\mathrm{CMO}}^{q_m}(\mathbb{R}^{n})$.

$\rm(ii)$ Assume that $\lambda_1p_1=\cdots=\lambda_mp_m$. In this case the condition
$\mathbb{F}_{m}<\infty$ in (i) is also necessary.
\end{theorem}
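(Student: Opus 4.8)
The plan is to follow the template of the proof of Theorem \ref{t3}, transferring every step through the substitution that defines the Ces\`aro structure. By the symmetry of the hypotheses I would first reduce to the bilinear case $m=2$; the general case is identical up to notation. The only structural novelty compared with Section 3 is that the dilation $t_ix\mapsto x/t_i$ sends a ball $B=B(0,R)$ to the \emph{larger} ball $t_i^{-1}B\supset B$ and carries the extra Jacobian weight $t_i^{-n}$ from the kernel.

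For the sufficiency (i), the starting point is the dilation identity under the Ces\`aro substitution $y=x/t_i$. Changing variables gives
$$\left(\frac{1}{|B|^{1+\lambda_i p_i}}\int_B |f_i(x/t_i)\,t_i^{-n}|^{p_i}\,dx\right)^{1/p_i} = t_i^{-n\lambda_i-n}\left(\frac{1}{|t_i^{-1}B|^{1+\lambda_i p_i}}\int_{t_i^{-1}B}|f_i(y)|^{p_i}\,dy\right)^{1/p_i}\le t_i^{-n\lambda_i-n}\|f_i\|_{\dot{B}^{p_i,\lambda_i}},$$
which is exactly where the exponent $t_i^{-n\lambda_i-n}$ appearing in $\mathbb{F}_m$ comes from: the $t_i^{-n\lambda_i}$ is the Morrey dilation factor and the extra $t_i^{-n}$ is the kernel weight. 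After Minkowski's inequality I would insert the averages $b_{i,B}$ and $b_{i,t_i^{-1}B}$ and expand $\prod_{i=1}^2\bigl(b_i(x)-b_i(x/t_i)\bigr)$ into the same six groups $I_1,\dots,I_6$ as in Theorem \ref{t3}. Each group is handled by H\"older's inequality with the exponents $1/s_i=1/p_i+1/q_i$ (and $1/s=1/q_1+1/q_2$ for the genuinely bilinear cross terms $I_4,I_5,I_6$), the dilation identity above, the $\dot{\mathrm{CMO}}^{q_i}$ bound, and the dyadic telescoping estimate $|b_{i,B}-b_{i,t_i^{-1}B}|\le C\log\frac{2}{t_i}\|b_i\|_{\dot{\mathrm{CMO}}^{q_i}}$. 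Here I would flag that, although the chain of balls now expands rather than contracts, $\|\cdot\|_{\dot{\mathrm{CMO}}^{q_i}}$ controls $|b_{i,2^{j}B}-b_{i,2^{j+1}B}|$ in either direction, so the telescoping goes through unchanged. Summing the six contributions bounds the $\dot{B}^{p,\lambda}$ norm by $C\,\mathbb{F}_2\,\prod_i\|f_i\|\,\prod_i\|b_i\|$.

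For the necessity (ii), I would first reduce $\mathbb{F}_2<\infty$ to the finiteness of four auxiliary integrals by expanding $\prod_{i=1}^2\log\frac{2}{t_i}=\prod_{i=1}^2\bigl(\log 2+\log\frac1{t_i}\bigr)$; these are the Ces\`aro analogues of $\mathbb{A}_2,\mathbb{D},\mathbb{E},\mathbb{B}_2$, all carrying the weight $\prod_i t_i^{-n\lambda_i-n}$. To test them I would use the power functions $f_i=|x|^{n\lambda_i}$ (suitably truncated), for which $f_i(x/t_i)\,t_i^{-n}=t_i^{-n\lambda_i-n}|x|^{n\lambda_i}$, so that the operator reproduces exactly the desired weight; the explicit value of $\|f_i\|_{\dot{B}^{p_i,\lambda_i}}$ together with the assumption $\lambda_1p_1=\lambda_2p_2$ turns the resulting estimate into an equality. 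Choosing the symbol $b_i(x)=\log|x|$ produces the factor $\log\frac1{t_i}$ directly, since $b_i(x)-b_i(x/t_i)=\log t_i$; choosing instead $b_i(x)=\chi_{[B(0,R/2)]^c}(x)\sin(\pi r|x|)$ and invoking the Riemann--Lebesgue-type Lemma \ref{LA} lets me annihilate the oscillatory term $\sin(\pi r|x|/t_i)$ and isolate the remaining integrals after letting the truncation parameter tend to $0^+$.

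The main obstacle is this necessity argument: exactly as in Theorem \ref{t3} it requires a delicate application of Lemma \ref{LA} on domains of $\vec{t}$ whose boundaries depend on $|x|$, while one must simultaneously check that the weight $\log\frac1{t_2}$ together with the Jacobian powers $t_i^{-n}$ remains integrable on the relevant rectangle so that the Lemma applies. Everything else is a routine transcription of Section 3 under the replacements $t_ix\mapsto x/t_i$, $t_iB\mapsto t_i^{-1}B$, and the modified exponent $t_i^{-n\lambda_i}\mapsto t_i^{-n\lambda_i-n}$.
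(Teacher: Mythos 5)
Your proposal is correct and takes essentially the same approach as the paper, which in fact writes no proof of Theorem \ref{t5} beyond the remark that it is ``similar to the arguments in Section 3''; your transcription identifies exactly the right modifications (the change of variables giving the weight $t_i^{-n\lambda_i-n}$, the expanding chain of balls in the telescoping estimate, and the adapted test functions and symbols). The one detail worth making explicit is that Lemma \ref{LA} is stated for the oscillation $\sin(\pi r t_i)$, so to annihilate $\sin(\pi r|x|/t_i)$ you should first substitute $s_i=1/t_i$, which is harmless on a rectangle bounded away from $t_i=0$.
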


As an immediate corollary, we have the following consequence.

\begin{corollary} Let $0<\alpha<m$. Under the assumptions of Theorem \ref{t5},
we have $J^{m}_{\alpha, \vec{b}}$ maps the product of weighted Lebesgue spaces
$\dot{B}^{p_1, \lambda_1}(\mathbb{R})\times \dots \times
\dot{B}^{p_m, \lambda_m}(\mathbb{R})$ to $ \dot{B}^{p, \lambda}(x^{pm-p\alpha}dx)$
\end{corollary}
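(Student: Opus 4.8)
The plan is to deduce this corollary directly from Theorem~\ref{t5}(i) by exploiting the pointwise identity $\mathcal{G}_{\omega}^{\vec b}(\vec f)(x)=x^{m-\alpha}J^{m}_{\alpha,\vec b}\vec f(x)$ for $x>0$, which holds for the specific kernel $\omega(\vec t)=\frac{1}{\Gamma(\alpha)|(\frac1{t_1}-1,\dots,\frac1{t_m}-1)|^{m-\alpha}}$. Rewriting this as $J^{m}_{\alpha,\vec b}\vec f(x)=x^{\alpha-m}\mathcal{G}_{\omega}^{\vec b}(\vec f)(x)$, the weight $x^{pm-p\alpha}=x^{p(m-\alpha)}$ cancels the factor $x^{p(\alpha-m)}$ coming from the identity, so that for every ball $B=B(0,R)$,
\[
\frac{1}{|B|^{1+\lambda p}}\int_B\bigl|J^{m}_{\alpha,\vec b}\vec f(x)\bigr|^{p}\,x^{pm-p\alpha}\,dx
=\frac{1}{|B|^{1+\lambda p}}\int_B\bigl|\mathcal{G}_{\omega}^{\vec b}(\vec f)(x)\bigr|^{p}\,dx.
\]
Taking the supremum over $R>0$ yields $\|J^{m}_{\alpha,\vec b}\vec f\|_{\dot B^{p,\lambda}(x^{pm-p\alpha}dx)}=\|\mathcal{G}_{\omega}^{\vec b}(\vec f)\|_{\dot B^{p,\lambda}}$. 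Thus it suffices to apply Theorem~\ref{t5}(i) to $\mathcal{G}_{\omega}^{\vec b}$ with this choice of $\omega$; a parenthetical remark should record that the identity is valid for $x>0$, so the weight is read as $|x|^{pm-p\alpha}$ on the symmetric balls $B(0,R)$, and the hypotheses $1<p<p_i<\infty$, $-1/p_i<\lambda_i<0$, $\lambda=\lambda_1+\cdots+\lambda_m$ are inherited verbatim from Theorem~\ref{t5}.

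The one genuine verification is that the defining integral $\mathbb{F}_m$ is finite for this kernel. With $n=1$,
\[
\mathbb{F}_m=\frac1{\Gamma(\alpha)}\int\limits_{0<t_1,\dots,t_m<1}
\Bigl(\prod_{i=1}^m t_i^{-\lambda_i-1}\Bigr)
\frac{\prod_{i=1}^m\log\frac2{t_i}}{\bigl|(\tfrac1{t_1}-1,\dots,\tfrac1{t_m}-1)\bigr|^{m-\alpha}}\,d\vec t.
\]
I would split $(0,1)^m$ into a small neighborhood of the corner $(1,\dots,1)$ and its complement. Away from the corner the Euclidean norm in the denominator is bounded below, so the integrand is dominated by $\prod_i t_i^{-\lambda_i-1}\log\frac2{t_i}$; since $-\lambda_i-1>-1$ (because $\lambda_i<0$) and the logarithmic factor does not spoil integrability, this contribution is finite. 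Near the corner I would substitute $s_i=1-t_i$ and use $\frac1{t_i}-1=\frac{1-t_i}{t_i}\asymp s_i$, which turns the denominator into the genuine $m$-dimensional Riesz-type singularity $|s|^{-(m-\alpha)}$, locally integrable on $\mathbb R^m$ precisely because $m-\alpha<m$ (i.e. $\alpha>0$), while the remaining factors $t_i^{-\lambda_i-1}$ and $\log\frac2{t_i}$ remain bounded there. Hence $\mathbb{F}_m<\infty$.

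With $\mathbb{F}_m<\infty$ in hand, Theorem~\ref{t5}(i) gives the boundedness of $\mathcal{G}_{\omega}^{\vec b}$ from $\dot B^{p_1,\lambda_1}(\mathbb R)\times\cdots\times\dot B^{p_m,\lambda_m}(\mathbb R)$ into $\dot B^{p,\lambda}(\mathbb R)$ for all $\vec b\in\dot{\mathrm{CMO}}^{q_1}(\mathbb R)\times\cdots\times\dot{\mathrm{CMO}}^{q_m}(\mathbb R)$, and the norm identity of the first paragraph transfers this to $J^{m}_{\alpha,\vec b}$ as a map into $\dot B^{p,\lambda}(x^{pm-p\alpha}dx)$, which is exactly the assertion. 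The main obstacle is the finiteness check for $\mathbb{F}_m$, specifically the interaction of the coordinate powers $t_i^{-\lambda_i-1}$ with the \emph{single} Euclidean-norm singularity at the corner; the substitution $s_i=1-t_i$ disentangles this cleanly and reduces it to the standard integrability of $|s|^{-(m-\alpha)}$ at the origin of $\mathbb R^m$.
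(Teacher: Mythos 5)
Your proposal is correct and is essentially the paper's (implicit) argument: the paper presents this corollary as an immediate consequence of Theorem \ref{t5}, obtained exactly as you do by taking $\omega(\vec t)=\frac{1}{\Gamma(\alpha)|(\frac{1}{t_1}-1,\dots,\frac{1}{t_m}-1)|^{m-\alpha}}$ and transferring norms through the identity $\mathcal{G}_{\omega}^{\vec b}(\vec f)(x)=x^{m-\alpha}J^{m}_{\alpha,\vec b}\vec f(x)$ for $x>0$. Your additional check that $\mathbb{F}_m<\infty$ for this kernel (kernel bounded away from the corner $(1,\dots,1)$, plus local integrability of $|s|^{\alpha-m}$ at the origin after the substitution $s_i=1-t_i$) is a genuine verification the paper leaves unstated, and you carry it out correctly.
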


Finally, we give some further comments on weighted product Hardy operators.
Let $\omega :
[0,1]\times[0,1]\rightarrow [0,\infty)$ be an integrable function. Let $f(x_{1}, x_{2})$
be  measurable
complex-valued functions  on $\mathbb{R}^n\times\mathbb{R}^m$. The \emph{weighted product Hardy operator} is defined as
$$\mathbb{H}_{\omega}f(x_{1}, x_{2}):=
\int\limits_{0<t_{1},t_{2}<1}f(t_{1}x_{1}, t_{2}x_{2})\omega(t_{1}, t_{2})
\,dt_{1}dt_{2}, \quad (x_1,x_2)\in \rn\times \mathbb{R}^m.$$

If $\omega\equiv 1$ and $n, m=1$, then $\mathbb{H}_{\omega}f$ is
reduced to the two dimensional Hardy operator $\mathbb{H}$ defined by
$$\mathbb{H}f(x_{1}, x_{2}):=
\frac{1}{x_{1}}\frac{1}{x_{2}}\int^{x_{1}}_{0}
\int^{x_{2}}_{0}f(t_{1}, t_{1})\,dt_{1}dt_{2},\,\quad x_{1}, x_{2}\neq0,$$
which is first introduced by Sawyer \cite{S}.
The sharp estimates for weighted product Hardy
operators and their commutators on Lebesgue
spaces will be interesting questions.
\medskip

\noindent{\bf Acknowledgements.}\quad  The authors cordially thank the referees for their careful reading and helpful
comments.

\end{document}